\title[Divergent points of higher dimensional Collatz mappings]
 {A geometric approach to divergent points of higher dimensional Collatz mappings} 
\author[S. Kionke]{Steffen Kionke}
\address{Heinrich-Heine-Universit\"at \\ Mathematisches Institut \\ Universit\"atsstr. 1 \\ 40225 D\"usseldorf \\ Germany.}
\email{steffen.kionke@uni-duesseldorf.de}
\thanks{The research was supported by DFG grant KL 2162/1-1.}
\date{\today}
\subjclass[2010]{Primary 11B83; Secondary 11H06, 11B37}
\keywords{Collatz conjecture, 3x+1 problem}
\theoremstyle{plain}
\newtheorem{theorem}{Theorem}
\newtheorem*{theorem*}{Theorem}
\newtheorem{lemma}{Lemma}
\newtheorem{proposition}{Proposition}
\theoremstyle{definition}
\newtheorem{definition}{Definition}
\newtheorem*{conjecture}{Conjecture}
\newtheorem*{remark}{Remark}
\newtheorem{example}{Example}
\DeclareMathOperator{\vol}{vol}
\DeclareMathOperator{\Diver}{Div}
\DeclareMathOperator{\stp}{Stop}
\DeclareMathOperator{\GL}{GL}
\providecommand{\bbN}{\mathbb{N}}
\providecommand{\bbR}{\mathbb{R}}
\providecommand{\bbQ}{\mathbb{Q}}
\providecommand{\bbZ}{\mathbb{Z}}
\begin{document}
\begin{abstract}
We define generalized Collatz mappings on free abelian groups of finite rank and study their iteration trajectories.
Using geometric arguments we describe cones of points having a divergent trajectory and
we deduce lower bounds for the density of the set of divergent points.
As an application we give examples which show that the iteration of generalized Collatz 
mappings on rings of algebraic integers can behave quite differently from the conjectured behavior in $\bbZ$.  
\end{abstract}

\maketitle

\section{Introduction}

Consider the map $T\colon \bbZ \to \bbZ$ defined by $T(x) = \frac{x}{2}$ if $x$ is even and $T(x) = \frac{3x+1}{2}$ if $x$ is odd.
The famous $3x+1$ problem asks whether the iteration trajectory $T(n), T^2(n), T^3(n), \dots$ of every positive integer $n$ will eventually reach the value $T^k(n) = 1$.
This problem, known also as \emph{Collatz's problem}, is unsolved.
Many authors studied it and we refer to
the various survey articles \cite{ChamberlandSurvey, LagariasGeneralizations, LagariasOverview, MatthewsSurvey} for
an extensive introduction to the problem.

Matthews and Watts \cite{MatthewsWatts1984} (pursuing work of M\"oller \cite{Moller1978}) introduced \emph{generalized Collatz mappings} $T_{d,m,r}\colon \bbZ \to \bbZ$ defined as
\begin{equation*}
   T_{d,m,r}(x) = \frac{m_i x + r_i}{d} \quad \text{ if } x \equiv i \bmod d,
\end{equation*}
where $d> 1$ is an integer and $m = (m_0, \dots, m_{d-1})$ and $r = (r_0,\dots,r_{d-1})$ are
$d$-tuples of integers such that $m_i \neq 0$ and $m_i i + r_i \equiv 0 \bmod d$ for all $i$.
An integer $n$ is said to be \emph{eventually cyclic} for $T_{d,m,r}$ if the sequence $(T^i_{d,m,r}(n))_{i\in\bbN}$ is eventually periodic --
otherwise $n$ is called \emph{divergent}.
An integer $n$ is said to have \emph{finite stopping time} for $T_{d,m,r}$ if $|T^k_{d,m,r}(n)| < |n|$ for some $k$.
Clearly, if all but finitely many integers have finite stopping time, then every trajectory is eventually periodic.
It is known that, if the $m_i$ and $d$ are coprime and $|m_0 m_1 \cdots m_{d-1}| < d^d$, then the density of the set of integers with finite stopping time 
for $T_{d,m,r}$ is $1$ (see \cite{Heppner1979, Moller1978} in general and \cite{Everett1977, Terras1976} for the original problem).
This result and numerical experiments lead many authors to variations of the following conjecture. 
\begin{conjecture}[cf.~Conjecture 3.1 in \cite{MatthewsSurvey}]
  If $d$ and $m_i$ are coprime for all $i$ and $|m_0 m_1 \cdots m_{d-1}| < d^d$, then every integer is eventually cyclic for $T_{d,m,r}$.
\end{conjecture}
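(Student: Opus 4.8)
The plan is to attack the conjecture through the standard multiplicative-drift heuristic and then confront the exceptional set head on. For $x$ with $x \equiv i \bmod d$ and $|x|$ large we have $|T_{d,m,r}(x)| \approx \frac{|m_i|}{d}|x|$, so along a trajectory with residue sequence $i_0, i_1, i_2, \dots$ the logarithmic size evolves as
\[
\log |T^N_{d,m,r}(x)| = \log|x| + \sum_{k=0}^{N-1}\log\frac{|m_{i_k}|}{d} + O(1),
\]
where the error is bounded because the additive shifts $r_i$ become negligible relative to $|x|$. The hypothesis $|m_0 m_1\cdots m_{d-1}| < d^d$ says exactly that the \emph{mean} logarithmic factor over a full set of residues, $\frac1d\sum_{i=0}^{d-1}\log\frac{|m_i|}{d}$, is strictly negative. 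Thus, if the residues $i_k$ visited along the trajectory equidistribute modulo $d$, the empirical average of $\log\frac{|m_{i_k}|}{d}$ converges to this negative quantity, forcing $\log|T^N_{d,m,r}(x)| \to -\infty$; the trajectory then enters a bounded set and is eventually periodic. The whole proof therefore reduces to controlling the residue frequencies along orbits.

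The first concrete step is to establish equidistribution of residues. The coprimality assumption $\gcd(d, m_i) = 1$ makes each branch $x \mapsto (m_i x + r_i)/d$ a bijection between the relevant residue classes, which is what lets the induced map on $\bbZ/d^n\bbZ$ be analyzed combinatorially. Following Terras and M\"oller, I would show that as $x$ ranges over residues modulo $d^n$, the initial block $(i_0, \dots, i_{n-1})$ ranges bijectively over $\{0, \dots, d-1\}^n$; a counting argument then yields that, for a set of integers of natural density $1$, the residue frequencies converge to the uniform distribution. Combined with the drift computation this recovers the classical result that density-$1$ many integers have finite stopping time, and a standard descent argument upgrades this to density-$1$ many integers being eventually cyclic, so that the set of potentially divergent points has density $0$.

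The hard part — and the reason the statement remains a conjecture rather than a theorem — is upgrading ``density $1$'' to ``every integer.'' The equidistribution above is a statement about $x$ ranging over residue classes, not about a single orbit. A priori, one specific integer could have a trajectory whose residue frequencies never settle down to the uniform distribution, keeping the running average of $\log\frac{|m_{i_k}|}{d}$ nonnegative forever and producing divergence. The set of such atypical parity sequences is uncountable, even though it is negligible in the $d$-adic measure that conjugates $T_{d,m,r}$ on $\bbZ$ to the shift on $\bbZ_d$. To close the gap one would need to prove that the natural embedding $\bbZ \hookrightarrow \bbZ_d$ sends every integer into the measure-one set of equidistributed sequences — equivalently, that no integer realizes an atypical orbit. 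This is precisely the content of the Collatz-type conjecture, and I expect it to be the genuine obstacle: even the strongest available results (such as the theorem that almost all orbits of the classical $3x+1$ map attain almost bounded values) stop short of handling \emph{all} starting values, because the drift argument is inherently probabilistic and gives no deterministic control over an individual trajectory's residue statistics.
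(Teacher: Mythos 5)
The statement you were asked about is the \emph{Conjecture} quoted from the introduction: it is an open problem, the paper does not prove it, and indeed the whole point of the paper is to show that its natural generalization to lattices of rank $\geq 2$ (equivalently, to rings of integers of number fields $K \neq \bbQ$) is \emph{false}. Your write-up is candid about this: you describe the drift heuristic, the Terras--M\"oller--Heppner equidistribution argument for density-one stopping time, and then correctly identify that no known technique controls the residue statistics of an \emph{individual} orbit, which is exactly why the statement remains conjectural. So as a proof your proposal does not (and cannot, given current knowledge) close the argument; as a diagnosis of the obstruction it is essentially accurate and consistent with the literature the paper cites.

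One substantive error inside your heuristic deserves flagging: you assert that ``a standard descent argument upgrades'' density-one finite stopping time to density-one eventual cyclicity. No such upgrade exists. The descent (strong induction on $|n|$) only works when the set of integers \emph{without} finite stopping time is finite, which is the hypothesis the paper states for that implication; a density-zero exceptional set can still be infinite, and trajectories of typical points may land in it. In fact, the paper's Section \ref{sec:examples} is built precisely on this failure: Proposition \ref{prop:extremeExample} exhibits generalized Collatz mappings with $\delta(\stp(T)) = 1$ while $\underline{\delta}(\Diver(T)) \geq 1 - \varepsilon$, so finite stopping time for almost all points coexists with divergence for almost all points. Any argument for the conjecture that routes density-one stopping time through a descent step must therefore use something special about rank one (in $\bbZ$, shrinking in absolute value eventually forces entry into a fixed finite set, since balls contain finitely many lattice points \emph{and} the exceptional set would have to be avoided), and even there the upgrade from density one to all integers is the open problem, as you say.
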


In this article we show that the direct generalization of this conjecture
to the ring of integers of any algebraic number field $K \neq \bbQ$ does not hold.
More precisely, we study generalized Collatz mappings on free abelian groups of rank 
greater than one.
Using a geometric argument we obtain a lower bound for the lower asymptotic density 
of the set of divergent points.
On the other hand we generalize the known results on the density of points with
finite stopping time to the higher dimensional situation.
This enables us to construct, for any given $\varepsilon > 0$, an example of
a generalized Collatz mapping on a lattice of rank $\geq 2$ such that
the set of elements with finite stopping time has density $1$, whereas
the set of divergent elements has lower asymptotic density at least $1-\varepsilon$.

\subsection{The main result}
Let $\Lambda$ be a free abelian group of finite rank $e$. 
We consider $\Lambda$ as a lattice in the real vector space $E = \Lambda \otimes_\bbZ \bbR$.
We equip $E$ with some norm $\Vert \cdot \Vert$, which will be used to define the asymptotic density of subsets of $\Lambda$.
Fix an integer $d > 1$. For every coset $\omega \in \Lambda/d\Lambda$
we choose an integer $m_\omega > 0$ and a vector $r_\omega \in \Lambda$ such that 
$m_\omega \omega + r_\omega \equiv 0 \bmod d\Lambda$. The map $T= T_{d,m,r} \colon \Lambda \to \Lambda$
defined by
\begin{equation*}
  T(x) = \frac{1}{d}(m_\omega x + r_\omega) \quad \text{ if } x \in \omega
\end{equation*}
will be called the \emph{generalized Collatz mapping} associated with $(d, m, r)$.
We study the iteration trajectories $(T^k(x))_{k\in \bbN}$ for elements $x \in \Lambda$. 
In particular, we are interested in the set $\Diver(T)$ of points which have a divergent $T$-trajectory.

We say that $T$ is of \emph{relatively prime type},
if for all $\omega \in \Lambda/d\Lambda$ the numbers $d$ and  $m_\omega$ are coprime (cf.~\cite{MatthewsSurvey}).
By $S_r$ we denote the set $S_r = \{ \: r_\omega \:|\: \omega \in \Lambda/d\Lambda \:\}$ of all shift vectors.
We say that $S_r$ is \emph{acute} if all non-zero 
vectors in $S_r$ lie in some open half-space. Let $B_r^+$ denote the closed convex cone spanned by $S_r$ and put $B_r^- = - B_r^+$.

Assume that $T$ is of relatively prime type and $S_r$ is acute.
We construct finitely 
many hyperplanes $H_1,\dots,H_k$ in $E$ (called \emph{separating} hyperplanes) and we study the decomposition of
 $E \setminus \bigcup_{j=1}^k H_j$ into its connected components
\begin{equation*}
 E \setminus \bigcup_{j=1}^k H_j = \bigcup_{i=1}^N C_i
\end{equation*}
where the components $C_1, \dots, C_N$ are open convex cones.
We say that the cone $C_i$ is \emph{wild} if $C_i \cap B^+_r \neq \emptyset$ and we define the
\emph{wild cone} $W_T$ of $T$ as the union of $B_r^-$ with the closures of the wild cones $C_j$.
The complement $D_T$ of the wild cone in $E$ will be called the \emph{tame cone} of $T$.

\begin{theorem*}
Assume that $T$ is of relatively prime type and $S_r$ is acute. Every point $x \in \Lambda \cap D_T$ has a divergent $T$-trajectory.
The lower asymptotic density of $\Diver(T)$ satisfies
\begin{equation*}
   \underline{\delta}(\Diver(T)) \geq \frac{\vol(D_T \cap \beta(1))}{ \vol(\beta(1))}
\end{equation*}
where $\beta(1)$ denotes the unit ball in $E$ with respect to the chosen norm.
\end{theorem*}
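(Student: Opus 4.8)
The plan is to prove the inclusion $\Lambda \cap D_T \subseteq \Diver(T)$ and then pass to densities by a lattice-point count. First I would reduce the dynamical statement to a combinatorial one: since $\Lambda$ is discrete, a trajectory is bounded if and only if it takes only finitely many values, hence if and only if it is eventually periodic. Thus $x$ is divergent precisely when its trajectory is unbounded, and it suffices to prove that no point of $\Lambda\cap D_T$ is eventually periodic.

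The algebraic core is to locate the periodic points. If $y$ has period $p$ with successive residues $\omega_0,\dots,\omega_{p-1}$, then unwinding the affine recursion $z\mapsto \tfrac1d(m_\omega z + r_\omega)$ gives
\begin{equation*}
 (1-P)\,y = s, \qquad P = \frac{m_{\omega_0}\cdots m_{\omega_{p-1}}}{d^p}, \qquad s = \sum_{j=0}^{p-1} c_j\, r_{\omega_j}, \quad c_j = \frac{1}{d}\prod_{l=j+1}^{p-1}\frac{m_{\omega_l}}{d}.
\end{equation*}
Each coefficient $c_j$ is positive, so $s$ is a nonnegative combination of shift vectors and lies in the convex cone $B_r^+$. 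Because $T$ is of relatively prime type, the integer $m_{\omega_0}\cdots m_{\omega_{p-1}}$ is coprime to $d$ and hence cannot equal $d^p$; therefore $P\neq 1$. If $P<1$ then $y$ is a positive multiple of $s\in B_r^+$, so $y\in B_r^+$; if $P>1$ then $y$ is a negative multiple of $s$, so $y\in B_r^-$. In either case every periodic point, and hence the cyclic part of every eventually periodic trajectory, lies in $B_r^+\cup B_r^-\subseteq W_T$.

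It remains to show that the trajectory of a tame point never reaches this cyclic locus. Here the separating hyperplanes enter: they are built from $(d,m,r)$ precisely so that tame trajectories cannot escape into the wild cone, i.e.\ $T(\Lambda\cap D_T)\subseteq D_T$. Granting this forward invariance, a tame trajectory stays in $D_T$, which is disjoint from $B_r^+\cup B_r^-$, so by the previous step it meets no periodic point and is therefore not eventually periodic, hence divergent. I expect proving this invariance to be the main obstacle. The difficulty is that $T$ is only piecewise affine: on each residue class it scales by $m_\omega/d$ and then translates by the shift $r_\omega/d\in B_r^+$, and one must verify that this combined motion cannot carry a point of $D_T$ across a separating hyperplane into a wild cone or into $B_r^-$. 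The delicate cases are points near the cone boundaries and points of small norm, where the translation is not negligible compared with the scaled position; controlling them is exactly what the placement of $H_1,\dots,H_k$ (which must reflect the sizes of the $m_\omega$ relative to $d$, not merely the directions of the shifts) is designed to achieve.

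Finally I would deduce the density bound. The set $D_T$ is a finite union of closed convex cones, hence invariant under positive scaling, and its boundary lies in finitely many hyperplanes, so it is Lipschitz. Writing $\beta(R)$ for the ball of radius $R$ and $v$ for the covolume of $\Lambda$, a standard lattice-point estimate together with the homogeneity $\vol(D_T\cap\beta(R))=R^{e}\vol(D_T\cap\beta(1))$ gives
\begin{equation*}
 \#\bigl(\Lambda\cap D_T\cap \beta(R)\bigr) = \frac{R^{e}\,\vol\bigl(D_T\cap\beta(1)\bigr)}{v} + O(R^{e-1}), \qquad \#\bigl(\Lambda\cap\beta(R)\bigr) = \frac{R^{e}\,\vol(\beta(1))}{v} + O(R^{e-1}).
\end{equation*}
Since $\Lambda\cap D_T\subseteq\Diver(T)$ by the first part, dividing the two counts and letting $R\to\infty$ yields
\begin{equation*}
 \underline{\delta}(\Diver(T)) \ge \lim_{R\to\infty} \frac{\#(\Lambda\cap D_T\cap\beta(R))}{\#(\Lambda\cap\beta(R))} = \frac{\vol(D_T\cap\beta(1))}{\vol(\beta(1))},
\end{equation*}
which is the claimed inequality; the lower density rather than the density appears because $\Diver(T)$ may be strictly larger than $\Lambda\cap D_T$.
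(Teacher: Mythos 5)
Your proposal correctly reduces the theorem to two facts --- that every eventually periodic trajectory must enter $B_r^+\cup B_r^-$, and that $\Lambda\cap D_T$ is forward invariant --- and your first fact is proved by a clean computation (unwinding the affine recursion to get $(1-P)y=s$ with $s\in B_r^+$ and $P\neq 1$ by coprimality) that is a genuinely different, and arguably slicker, route than the paper's, which instead shows that every point outside $B_r^+\cup B_r^-$ lies on a ``semipermeable'' hyperplane $\ker\Phi$ for some form $\Phi$ strictly positive on $S_r$ and that such a point cannot be periodic. The density step at the end is also fine and matches the paper's lattice-point count. But the forward invariance --- which you explicitly defer with ``granting this forward invariance'' and flag as the main obstacle --- is precisely the content of the theorem, and leaving it unproved is a genuine gap: without it nothing prevents a tame trajectory from wandering into $B_r^+$ and landing on a cycle.

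The missing step is less delicate than you fear, because the difficulty you identify (points of small norm, where the translation $r_\omega/d$ is not negligible) is already absorbed into the \emph{definition} of a separating form: $\Phi$ is separating iff $|m_\omega\Phi(x)|>|\Phi(r_\omega)|$ for \emph{every} $x\in\omega$ with $\Phi(x)\neq 0$, for every coset $\omega\in\Lambda/d\Lambda$. Since $\Phi$ can be normalized to take integer values on $\Lambda$, the nonzero values of $|\Phi|$ on a coset are bounded below, so this is a finite condition and only finitely many separating hyperplanes exist; there is no further estimate to make. Given the definition, one step of the dynamics preserves the sign of $\Phi$: from $\Phi(T(x))=\frac{m_\omega}{d}\Phi(x)+\frac{1}{d}\Phi(r_\omega)$ one gets $\Phi(T(x))>0$ whenever $\Phi(x)>0$, both when $\Phi$ is separating and when $\Phi$ is positive on $S_r$. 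The paper then shows each $x\in\Lambda\cap D_T$ lies in a cone $\{\Psi>0,\ \Phi_1>0,\dots,\Phi_\ell>0\}\subseteq D_T$, where $\Psi$ is positive for $S_r$ with $\Psi(x)>0$ (possible since $x\notin B_r^-$) and the $\Phi_i$ are separating forms cutting out an open cone containing $x$ and disjoint from $B_r^+$ (possible since $x$ avoids the closures of the wild cones); this cone is $T$-invariant by the one-step sign preservation, and the trajectory of $x$ therefore never meets $B_r^+\cup B_r^-$. Supplying this construction is what your argument still needs.
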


The result is a combination of Theorems \ref{thm:divergent} and \ref{thm:divDensity} proved
in Section \ref{sec:divergentCone}.
We generalize the known density results for the set of elements with finite stopping time in Section \ref{sec:stoppingTime}.
In Section \ref{sec:examples} we construct examples of generalized Collatz 
mappings where almost all elements have finite stopping time, but the density of divergent points is arbitrarily close to $1$.

\section{The wild and tame cone of a generalized Collatz mapping}\label{sec:divergentCone}
Let $\Lambda$ be a free abelian group of finite rank $e$ and
let $d > 1$ be a fixed integer. 
We write $E = \Lambda \otimes_\bbZ \bbR$ and $E_\bbQ$ denotes the $\bbQ$-span of $\Lambda$ in $E$.
We will illustrate our results and definitions with the following example of a generalized Collatz mapping.

\begin{example}\label{theExample}
  Consider the natural extension of the original Collatz
  mapping to the ring $\bbZ[\sqrt{2}]$ (considered already in \cite[Sect.~10]{MatthewsSurvey}).
  This is the map $C\colon \bbZ[\sqrt{2}] \to \bbZ[\sqrt{2}]$ defined by 
  $C(z) = z/\sqrt{2}$ if $z$ is divisible by $\sqrt{2}$ and $C(z) = \frac{3z+1}{\sqrt{2}}$ otherwise.
  Let $\Lambda = \bbZ[\sqrt{2}]$ and consider the mapping $T = C^2 \colon \Lambda \to \Lambda$.
  With respect to the $\bbZ$-basis $(1, \sqrt{2})$ of $\Lambda$ the mapping $T$ takes the form
  \begin{equation*}
      T(x,y) = \begin{cases}
                  \frac{1}{2}(x,y)     \quad& \text{ if $x,y$ even}  \\
                  \frac{1}{2}(3x+1,3y) \quad& \text{ if $x$ odd, $y$ even }\\
                  \frac{1}{2}(3x,3y+1) \quad& \text{ if $x$ even, $y$ odd }\\
                  \frac{1}{2}(9x+3,9y+1) \quad& \text{ if $x,y$ odd } \\
               \end{cases}.
  \end{equation*}
  Thus $T$ is a generalized Collatz mapping of relatively prime type.
\end{example}

\subsection{Semipermeable hyperplanes and directed points}

\begin{definition}
 Let $S \subseteq E$ be a set of vectors.
 A non-trivial linear form $\Phi\colon E \to \bbR$ is called \emph{positive} for $S$, if $\Phi(w) \geq 0$ for all $w \in S$. 
 We say that $\Phi$ is \emph{strictly positive} for $S$ if $\Phi(w) > 0$ for all non-zero $w \in S$.
 A set $S$ which admits a strictly positive form is called \emph{acute}.
\end{definition}
 The kernel $H$ of a strictly positive form $\Phi$ for the set $S_{r} = \{ \: r_\omega \:|\: \omega \in \Lambda/d\Lambda \:\}$ will be called 
 a \emph{semipermeable} hyperplane for $T_{d,m,r}$.
 This name is inspired by the following simple observation:
 If $\Phi$ is a positive form for $S_{r}$, then
 no trajectory of $T_{d,m,r}$ can pass from the positive half-space of $\Phi$ to the negative half-space.
 In particular, we obtain the following result.
 
 \begin{lemma}
   Assume that $T = T_{d,m,r}$ is of relatively prime type.
   Every point $x\neq 0$ that lies on a semipermeable hyperplane does not have a cyclic $T$-trajectory.
 \end{lemma}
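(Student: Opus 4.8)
The plan is to argue by contradiction. Suppose $x \ne 0$ lies on the semipermeable hyperplane $H = \ker \Phi$ (so $\Phi$ is strictly positive for $S_r$) and suppose $x$ has a cyclic trajectory, i.e.\ $T^k(x) = x$ for some $k \ge 1$; I aim to show this is incompatible with relatively prime type. Writing $x_0 = x$, $x_{n+1} = T(x_n)$, with $x_n$ lying in the coset $\omega_n$, I track the scalar sequence $a_n = \Phi(x_n)$. Since $T(x_n) = \tfrac{1}{d}(m_{\omega_n} x_n + r_{\omega_n})$ and $\Phi$ is linear, I obtain the recursion
\begin{equation*}
   a_{n+1} = \frac{1}{d}\bigl(m_{\omega_n} a_n + \Phi(r_{\omega_n})\bigr),
\end{equation*}
in which $m_{\omega_n} > 0$, $d > 0$, and $\Phi(r_{\omega_n}) \ge 0$ because $\Phi$ is positive for $S_r$. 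Starting from $a_0 = 0$ this already forces $a_n \ge 0$ for all $n$, which is the quantitative form of the observation that trajectories cannot cross from the positive to the negative half-space.

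The key step is to pin the entire cycle onto $H$. The recursion shows that $a_n > 0$ implies $a_{n+1} \ge \tfrac{m_{\omega_n}}{d} a_n > 0$, so once the sequence becomes strictly positive it stays strictly positive forever. But $(a_n)$ is periodic, since $(x_n)$ is, and it takes the value $0$ at $n = 0$, hence at infinitely many indices; this rules out the sequence being eventually strictly positive. Therefore $a_n = 0$ for every $n$. Substituting $a_n = a_{n+1} = 0$ back into the recursion gives $\Phi(r_{\omega_n}) = 0$ for every $n$ along the cycle, and because $\Phi$ is \emph{strictly} positive for $S_r$ this forces $r_{\omega_n} = 0$ for all $n$.

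With all shift vectors on the cycle vanishing, the map reduces to a pure scaling $x_{n+1} = \tfrac{m_{\omega_n}}{d} x_n$, and running once around the cycle together with $x_k = x_0 = x \ne 0$ yields
\begin{equation*}
   \prod_{n=0}^{k-1} m_{\omega_n} = d^k.
\end{equation*}
This is the decisive step, and it is exactly where relatively prime type enters: since $\gcd(d, m_{\omega_n}) = 1$ for each $n$, the product $\prod_{n} m_{\omega_n}$ is coprime to $d^k$, so the displayed equality forces $d^k = 1$ and hence $d = 1$, contradicting $d > 1$.

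I expect the only genuinely delicate point to be the ``once positive, stays positive'' monotonicity used to force the whole cycle onto $H$; the remainder is elementary linear algebra and a one-line coprimality argument. One minor thing to fix at the outset is the reading of ``cyclic trajectory'' as a genuinely periodic orbit $T^k(x) = x$ rather than merely an eventually periodic one, since it is precisely this that makes the clean return $a_0 = a_k = 0$ available.
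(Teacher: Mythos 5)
Your proof is correct and takes essentially the same route as the paper's: track $a_n = \Phi(x_n)$ through the recursion $a_{n+1} = \tfrac{1}{d}\bigl(m_{\omega_n}a_n + \Phi(r_{\omega_n})\bigr)$, use ``once positive, stays positive'' to force $a_n = 0$ along the whole cycle, and deduce $r_{\omega_n}=0$ from strict positivity. The only difference is the endgame: the paper uses relatively prime type to conclude $\omega_n = 0$ and reads off a contradiction with $x \neq 0$, whereas you derive $\prod_{n=0}^{k-1} m_{\omega_n} = d^k$ from the resulting pure scaling and contradict $\gcd(m_\omega,d)=1$ --- an equally valid and, if anything, more explicit finish.
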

 \begin{proof}
   Suppose $x \neq 0$ lies on the semipermeable hyperplane $H$ and
   let $\Phi$ be a strictly positive form for $S_r$ with kernel $H$.
   Define $x_0 = x$ and consider the $T$-trajectory $x_{i+1} = T(x_i)$ of $x$.
   Put $\omega_i = x_i + d\Lambda$.
   We claim that $\Phi(x_i) > 0 $ for every sufficiently large integer $i$; henceforth $x_i \neq x$ for such $i$.
    
   Clearly, the equation 
   \begin{equation}\label{eq:stepTrajectory}
    \Phi(x_{i+1}) = \frac{m_{\omega_i}}{d} \Phi(x_i) + \frac{1}{d} \Phi(r_{\omega_i})
   \end{equation}
   and the positivity of $\Phi$
   yield $\Phi(x_{i+1}) > 0$ whenever $\Phi(x_i)>0$. It remains to show that $\Phi(x_j) > 0 $ for some $j$.
   
   Suppose $\Phi(x_i) = 0$ for all $i$. Equation \eqref{eq:stepTrajectory} implies that $\Phi(r_{\omega_i}) = 0$ for all $i$
   and, as $\Phi$ is strictly positive, we conclude that $r_{\omega_i} = 0$ for each index $i$.
   The mapping $T$ is of relatively prime type, so $r_{\omega_i} = 0$ implies $\omega_i = 0$.
   This finishes the proof as it
   contradicts our assumption $x \neq 0$.
 \end{proof}

 \begin{definition}
   A point $x \in \Lambda \setminus \{0\}$ is called \emph{directed} for $T_{d,m,r}$ if it lies on a semipermeable hyperplane.
 \end{definition}
 
  We give a geometric description of the set of directed points.
  Note that, if directed points exist, then $S_r$ is acute.
  Let $B_r^+$ be the closed convex cone generated by $S_r$ in $E$, this is, 
  the intersection of all closed linear half-spaces containing the set $S_r$.
  We write $B_r^-$ for $-B_r^+$.
  
  \begin{lemma}\label{lem:directed}
    Suppose that $S_r$ is acute.
    A point $x \in \Lambda$ is directed for $T$ if and only if $x \notin B_r^+ \cup B_r^-$.
  \end{lemma}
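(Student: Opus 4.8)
The plan is to first reformulate the notion of being directed and then prove the two implications separately. By definition, $x$ is directed if and only if $x \neq 0$ and there exists a strictly positive form $\Phi$ for $S_r$ with $\Phi(x) = 0$. Since $\Lambda/d\Lambda$ is finite, the set $S_r$ is finite, so $B_r^+$ is precisely the set of finite non-negative linear combinations of vectors in $S_r$ (a polyhedral, hence closed, convex cone); in particular $0 \in B_r^+$, so the characterization to be proved already forces $x \neq 0$ whenever $x \notin B_r^+ \cup B_r^-$. Throughout I fix a strictly positive form $\Phi_0$ for $S_r$, which exists because $S_r$ is acute.

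For the implication ``directed $\Rightarrow x \notin B_r^+ \cup B_r^-$'', suppose $x$ is directed, witnessed by a strictly positive $\Phi$ with $\Phi(x) = 0$, and assume toward a contradiction that $x \in B_r^+$. Writing $x = \sum_i \lambda_i w_i$ with $\lambda_i > 0$ and $w_i \in S_r$, and discarding any summands with $w_i = 0$, the hypothesis $x \neq 0$ leaves at least one term with $w_i \neq 0$. Then $\Phi(x) = \sum_i \lambda_i \Phi(w_i) > 0$, because every surviving $\Phi(w_i)$ is strictly positive, which contradicts $\Phi(x) = 0$. Applying the same argument to $-x$ rules out $x \in B_r^-$, and hence $x \notin B_r^+ \cup B_r^-$.

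For the converse, assume $x \notin B_r^+ \cup B_r^-$. The idea is to produce two strictly positive forms taking opposite signs on $x$ and then interpolate between them. Since $B_r^+$ is a closed convex cone not containing $x$, the separating hyperplane theorem provides a linear form $\Psi_+$ with $\Psi_+(w) \geq 0$ for all $w \in B_r^+$ (so $\Psi_+$ is positive for $S_r$) and $\Psi_+(x) < 0$; here one uses that $B_r^+$ is a cone to make the separating functional homogeneous. Applying this to $-x \notin B_r^+$ yields a positive form $\Psi_-$ with $\Psi_-(x) > 0$. These forms are only weakly positive, so I replace them by $\Phi_\pm = \Psi_\pm + \varepsilon \Phi_0$: for every nonzero $w \in S_r$ one has $\Phi_\pm(w) \geq \varepsilon \Phi_0(w) > 0$, so $\Phi_\pm$ is strictly positive, while for $\varepsilon > 0$ small enough the strict inequalities $\Phi_+(x) < 0 < \Phi_-(x)$ persist by continuity.

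Finally, the set of strictly positive forms for $S_r$ is convex, so the segment $t \mapsto (1-t)\Phi_+ + t\Phi_-$ with $t \in [0,1]$ consists entirely of strictly positive forms; evaluated at $x$, it varies continuously from a negative to a positive value, so by the intermediate value theorem some $\Phi$ on the segment satisfies $\Phi(x) = 0$. This $\Phi$ is a strictly positive form for $S_r$ vanishing at $x$, so $x$ is directed. I expect the converse to be the main obstacle: the separation theorem delivers only weakly positive forms, and the genuine work lies in upgrading them to strictly positive forms of the correct sign (via the perturbation by $\Phi_0$) and then exploiting convexity of the strictly-positive cone to land exactly on $\Phi(x) = 0$.
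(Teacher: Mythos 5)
Your proof is correct and follows essentially the same route as the paper: in the converse direction you likewise upgrade the two weakly positive separating forms to strictly positive ones by adding a small multiple of a strictly positive form (the paper's ``we can assume $\Phi_1$ and $-\Phi_2$ are strictly positive'') and then take a positive combination vanishing at $x$, which is your convex-interpolation step. The only (harmless) variation is in the forward direction, where you evaluate $\Phi$ on a conical combination of generators of $B_r^+$ instead of perturbing $\Phi$ to exhibit a half-space containing $S_r$ but excluding $x$; both arguments are valid given that $B_r^+$ is the finitely generated (hence closed) cone on $S_r$.
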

  \begin{proof}
   ``$\Rightarrow$'': Let $x \in \Lambda$ be directed and pick a strictly positive form $\Phi$ for $S_r$ with $\Phi(x) = 0$.
   Since $x \neq 0$, there is a linear form $\alpha\colon E \to \bbR$ with $\alpha(x) > 0$. For all sufficiently small $\varepsilon > 0$
   the linear form $\Phi - \varepsilon \alpha$ is strictly positive for $S_r$ and its positive half-space
   contains $S_r$ but not $x$. We conclude $x \notin B_r^+$. Similarly, $x \notin B_r^-$ using the form $-\Phi - \varepsilon \alpha$.
   
   ``$\Leftarrow$'': Assume that $x \notin B_r^+ \cup B_r^-$. We can find linear forms $\Phi_1, \Phi_2 \colon E \to \bbR$
   such that the closed positive half-space of $\Phi_1$ contains $S_r$ but not $x$, and the closed positive half-space of $\Phi_2$ contains $-S_r$ but not $x$.
   Since $S_r$ is acute, we can assume that $\Phi_1$ and $-\Phi_2$ are strictly positive for $S_r$.
   Finally, we choose positive real numbers $u_1$ and $u_2$ such that $\Phi = u_1 \Phi_1 - u_2 \Phi_2$ is strictly positive for $S_r$ and $\Phi(x) = 0$.
  \end{proof}
  
  \begin{example}\label{theExample2}
    Consider the mapping $T$ defined in Example \ref{theExample}.
    The set of shift vectors is $S_r = \{(0,0), (1,0), (0,1), (3,1)\}$.
    It is acute; indeed, the form $\Phi(x,y) = x+y$ is strictly positive for $S_r$.
    The cone $B_r^+$ generated by $S_r$ is the first quadrant, the opposite cone $B_r^-$ is
    the third quadrant.
    All points in $\bbZ^2$ which lie in the second and fourth quadrant are hence directed and we conclude that
    there are no cycles in these quadrants.
    
    In an unpublished preprint of Kucinski \cite{Kucinski2000} a similar observation has been made for
    an extension of the original Collatz map to the ring $\bbZ[i]$ of Gaussian integers
    (the extension was already considered by Joseph in \cite{Joseph1998}).
  \end{example}

  We say that a subset $U \subset \Lambda$ is $T$-invariant if
  $T(U) \subseteq U$. The following result summarizes the discussion of this section.
  \begin{proposition}\label{prop:InvariantDirected}
    Assume that $T$ is of relatively prime type.
    If $U \subseteq \Lambda$ is a $T$-invariant subset of directed points, then every $T$-trajectory in $U$ is divergent.
  \end{proposition}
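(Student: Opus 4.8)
The plan is to leverage the earlier Lemma asserting that no nonzero point on a semipermeable hyperplane has a cyclic trajectory, and to upgrade its conclusion from ``not periodic'' to ``not eventually periodic'' using the $T$-invariance of $U$. The conceptual gap to close is exactly this distinction: that Lemma only forbids a directed point's trajectory from returning to its own starting value, whereas divergence requires that the trajectory avoid \emph{every} cycle, including cycles disjoint from the starting point.

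First I would fix an arbitrary $x \in U$ and form its trajectory $x_0 = x$, $x_{i+1} = T(x_i)$. Since $U$ is $T$-invariant, an immediate induction gives $x_i \in U$ for all $i$; consequently every point of the trajectory is directed and, by the definition of directedness, nonzero.

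Next I would proceed by contradiction, assuming the trajectory is eventually periodic. Then there are indices $i < j$ with $x_i = x_j$, so the point $y := x_i$ satisfies $T^{\,j-i}(y) = y$ and thus lies on a genuine cycle. But $y \in U$ is a nonzero directed point, hence lies on a semipermeable hyperplane, and the earlier Lemma prohibits any such point from having a cyclic trajectory. This contradiction establishes that the trajectory is not eventually periodic, i.e.\ it is divergent.

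The step carrying the real weight is the reduction in the last paragraph: $T$-invariance of $U$ together with directedness of all its members is precisely what converts the single-point statement of the Lemma into a statement about the whole orbit. Were $U$ merely a set of directed points without invariance, the trajectory could leave $U$ and enter a cycle of non-directed points, to which the Lemma would not apply; invariance rules this out by forcing any hypothetical cycle point back into $U$, where it is necessarily directed.
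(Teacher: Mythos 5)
Your proof is correct and follows essentially the same route as the paper's: use $T$-invariance to keep the whole orbit inside $U$, so that any hypothetical cycle point would itself be a nonzero directed point, contradicting the earlier lemma on semipermeable hyperplanes. The paper states this in two sentences; you have merely made the reduction from ``eventually cyclic'' to ``some orbit point is cyclic'' explicit.
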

  \begin{proof}
    Every point $x \in U$ is directed, hence its trajectory is not cyclic. Since $U$ is $T$-invariant, no trajectory leaves $U$ and hence
    cannot be eventually cyclic.
  \end{proof}

\subsection{Separating hyperplanes}\label{sec:separatingHyperplanes}

In this section we discuss a geometric decomposition of
$\Lambda$ into $T$-invariant subsets using \emph{separating} hyperplanes.

\begin{definition}
  A non-trivial linear form $\Phi\colon E \to \bbR$ is called \emph{separating} for $T_{d,m,r}$ if for all $\omega \in \Lambda/d\Lambda$ and all
  $x \in \omega$ with $\Phi(x) \neq 0$ the inequality  $|m_\omega \Phi(x)| > |\Phi(r_\omega)|$ holds.
  A hyperplane $H \subset E$ is called separating if there is a separating linear form $\Phi$ with $\ker(\Phi) = H$.
\end{definition}
Whether a linear form $\Phi$ is separating only depends on the hyperplane $H = \ker(\Phi)$. 
Observe that no $T$-trajectory can pass from either side of a separating hyperplane to the other; 
indeed, for $\Phi(x)\neq 0$ the number $\Phi(T(x))$ has the same sign as $\Phi(x)$.
A hyperplane $H \subseteq E$ is said to be defined over $\bbQ$, if there is a $\bbQ$-linear form $\Phi\colon E_\bbQ \to \bbQ$
such that $\ker(\Phi)$ spans $H$ over $\bbR$.

\begin{proposition}\label{prop:separating}
 Assume that the shift vectors $S_r$ span $E$ as real vector space.
 Then there are only finitely many separating hyperplanes and every such hyperplane is defined over $\bbQ$.
\end{proposition}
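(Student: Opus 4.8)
The plan is to exploit the scale-invariance of the separating condition already noted in the text (replacing $\Phi$ by $\lambda\Phi$ with $\lambda\neq 0$ multiplies both $|m_\omega\Phi(x)|$ and $|\Phi(r_\omega)|$ by $|\lambda|$) and to extract both integrality and finiteness from a dichotomy on the image group $\Phi(\Lambda)\subseteq\bbR$. First I would record the elementary fact that, for a non-trivial form $\Phi$, the subgroup $\Phi(\Lambda)$ is either dense in $\bbR$ or infinite cyclic, and that the latter happens precisely when $H=\ker(\Phi)$ is defined over $\bbQ$: writing $\Phi$ out on a $\bbZ$-basis of $\Lambda$, discreteness of $\Phi(\Lambda)$ is equivalent to the values of $\Phi$ on the basis being pairwise commensurable, i.e.\ to $\Phi$ being a real scalar multiple of a $\bbQ$-linear form.

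Next I would rule out the dense case for any separating $\Phi$, which simultaneously settles the ``defined over $\bbQ$'' claim. Suppose $\Phi(\Lambda)$ is dense and fix a coset $\omega$ with representative $x_0$. Then $\Phi(x_0+d\Lambda)=\Phi(x_0)+d\,\Phi(\Lambda)$ is again dense, so for every $\varepsilon>0$ there is an $x\in\omega$ with $0<|\Phi(x)|<\varepsilon$. The separating inequality forces $|\Phi(r_\omega)|<m_\omega|\Phi(x)|<m_\omega\varepsilon$, and letting $\varepsilon\to 0$ yields $\Phi(r_\omega)=0$. Since this holds for every $\omega$ and $S_r$ spans $E$, we would obtain $\Phi=0$, contradicting non-triviality. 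Hence $\Phi(\Lambda)$ is infinite cyclic and $H$ is defined over $\bbQ$.

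For the finiteness statement I would normalize. After rescaling (which changes neither $H$ nor the separating property) we may assume $\Phi(\Lambda)=\bbZ$, so $\Phi$ restricts to a surjection $\Lambda\to\bbZ$ and $\Phi(r_\omega)\in\bbZ$ for all $\omega$. The key quantitative input is a uniform upper bound on $\mu_\omega(\Phi)=\min\{\,|\Phi(x)| : x\in\omega,\ \Phi(x)\neq 0\,\}$: because $\Phi(\omega)=\Phi(x_0)+d\bbZ$ lies in a single residue class modulo $d$, its smallest nonzero absolute value is at most $d$, so $\mu_\omega(\Phi)\le d$. As the left-hand side $m_\omega|\Phi(x)|$ is increasing in $|\Phi(x)|$ and the minimum is attained, the separating condition for $\omega$ is equivalent to $|\Phi(r_\omega)|<m_\omega\,\mu_\omega(\Phi)$, whence $|\Phi(r_\omega)|<m_\omega d$ for every $\omega$. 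Thus each integer $\Phi(r_\omega)$ is confined to a fixed finite range.

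Finally I would invoke the spanning hypothesis to conclude. Choose cosets $\omega_1,\dots,\omega_e$ so that $r_{\omega_1},\dots,r_{\omega_e}$ form a basis of $E$, which is possible since $S_r$ spans $E$. A linear form is determined by its values on a basis, and each $\Phi(r_{\omega_j})$ is an integer in a bounded interval; hence only finitely many normalized separating forms $\Phi$ can occur, and therefore only finitely many separating hyperplanes. The main obstacle is the dichotomy step: one must see that the smallest nonzero value of $\Phi$ on a coset is simultaneously \emph{strictly positive} (forcing discreteness, via the spanning of $S_r$) and \emph{bounded above by $d$} (forcing the integers $\Phi(r_\omega)$ into a finite box); the remainder is bookkeeping with the scale-invariance.
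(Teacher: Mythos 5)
Your proof is correct and follows essentially the same route as the paper's: both use density of $\Phi$ on a coset of $d\Lambda$ (when $\ker\Phi$ is not defined over $\bbQ$) to produce arbitrarily small nonzero values violating the separating inequality, and both then normalize $\Phi(\Lambda)=\bbZ$ to confine the integers $\Phi(r_\omega)$ to the range $|\Phi(r_\omega)|<dm_\omega$ and conclude finiteness from the spanning hypothesis. Your packaging of the first step as the dense-or-cyclic dichotomy for $\Phi(\Lambda)$ is a clean equivalent of the paper's observation that $\dim_\bbQ(\Phi(E_\bbQ))\geq 2$ forces density.
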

\begin{proof}
  Let $H$ be a hyperplane which is not defined over $\bbQ$. We show that $H$ is not separating. 
  Take a linear form $\Phi\colon E \to \bbR$ with $\ker(\Phi) = H$. As $H$ is not defined over $\bbQ$, we have
  $\dim_\bbQ(\Phi(E_\bbQ)) \geq 2$ -- thus we can find a $\bbZ$-basis $u_1,\dots, u_e$ of $\Lambda$ such that 
  $\lambda_1 = \Phi(u_1)$ and $\lambda_2 = \Phi(u_2)$ are $\bbQ$-linearly independent real numbers.
  By assumption the vectors $(r_\omega)_{\omega \in \Lambda/d\Lambda}$ span $E$ over $\bbR$ and therefore we find some $\omega \in \Lambda/d\Lambda$ such that
  $\Phi(r_\omega) \neq 0$.
  Take any representative $x \in \omega$. The set of real numbers $\Phi(x) + \bbZ d\lambda_1 + \bbZ d \lambda_2$
  is dense in $\bbR$ and hence contains arbitrarily small positive elements. In particular, we find a vector $y\in \omega$
  of the form $y = x + dn_1 u_1 + dn_2 u_2$ with $|\Phi(y)| < \frac{|\Phi(r_\omega)|}{m_\omega}$.
  We conclude that $H$ is not separating.
  
  Finally, we show that the number of separating hyperplanes is finite.
  Let $H$ be a separating hyperplane. It is defined over $\bbQ$ and 
  thus we can find a defining $\bbQ$-linear form $\Phi\colon E_\bbQ \to \bbQ$ such that $\Phi(\Lambda) = \bbZ$.
  Hence for every coset $\omega \in \Lambda/d\Lambda$, the minimal non-zero value of $|\Phi(x)|$ for $x \in \omega$ is at most $d$.
  Since $\Phi$ is separating, we find $|\Phi(r_\omega)| < d m_\omega$ for all $\omega \in \Lambda/d\Lambda$.
  The form $\Phi$ is uniquely determined by its values on the spanning vectors $r_\omega$ and hence there is only a finite number of such forms $\Phi$.
  \end{proof}

\begin{example}\label{theExample3}
  The given proof can be easily transformed into a method to find all separating hyperplanes.
  We explain this by means of the mapping $T$ defined in Example \ref{theExample}.
  
  Let $\Phi\colon \bbR^2 \to \bbR$ be a separating linear form for $T$. By Proposition \ref{prop:separating} we can multiply $\Phi$ with a suitable scalar, such that
  $\Phi$ is of the form $\Phi(x,y) = ax + by$ with $a,b \in \bbZ$, $a \geq 0$ and $\gcd(a,b) = 1$. 
  We find conditions on the integers $a$ and $b$ so that the form $\Phi$ is separating.
  
  Let $u,v$ be integers. Since $\Phi( (u,v) + 2 \bbZ^2 ) = ua + vb + 2\bbZ$,
  the minimal non-zero absolute value of $\Phi$ on the coset $(u,v) + 2\bbZ^2$ equals $1$ if $ua+vb$ is odd and equals $2$ if $ua+vb$ is even.
  We distinguish three cases according to the parity of $a$ and $b$.
  
  %\begin{figure}[!htb]
   %\includegraphics[scale=.4]{separating_gencol.jpg}
    % \caption{The 10 separating hyperplanes of $T$.}
  %\end{figure}
 \begin{figure}[!htb]
  \begin{tikzpicture}[x=1cm, y=1cm, semitransparent]
\draw[step=0.5cm, line width=0.2mm, gray] (0,0) grid (6,6);
\draw[ thick, -latex, opacity=0.3] (0,3) -- (6,3);
\draw[ thick, -latex, opacity=0.3] (3,0) -- (3,6);
\draw[opacity=0.8] (4,3)  node [above] {\tiny{$2$}};
\draw[opacity=0.8] (5,3)  node [above] {\tiny{$4$}};
\draw[opacity=0.8] (2,3)  node [above] {\tiny{$-2$}};
\draw[opacity=0.8] (1,3)  node [above] {\tiny{$-4$}};
\draw[opacity=0.8] (3,4)  node [right] {\tiny{$2$}};
\draw[opacity=0.8] (3,5)  node [right] {\tiny{$4$}};
\draw[opacity=0.8] (3,2)  node [right] {\tiny{$-2$}};
\draw[opacity=0.8] (3,1)  node [right] {\tiny{$-4$}};
\draw[thick, blue, opacity=0.7] (0,3) -- (6,3);
\draw[thick, blue, opacity=0.7] (3,0) -- (3,6);
\draw[thick, blue, opacity=0.7] (0,0) -- (6,6);
\draw[thick, blue, opacity=0.7] (0,6) -- (6,0);
\draw[thick, blue, opacity=0.7] (6,4.5) -- (0,1.5);
\draw[thick, blue, opacity=0.7] (6,1.5) -- (0,4.5);
\draw[thick, blue, opacity=0.7] (6,3.75) -- (0,2.25);
\draw[thick, blue, opacity=0.7] (6,2.25) -- (0,3.75);
\draw[thick, blue, opacity=0.7] (4.5, 6) -- (1.5,0);
\draw[thick, blue, opacity=0.7] (4.5, 0) -- (1.5,6);
\node[draw=black, circle,inner sep=1pt,fill, fill opacity=0.9] at (3,3) {};
\filldraw[fill=red, fill opacity=0.1, opacity=0.2] (3,3)
        rectangle (6,6);
\filldraw[fill=red, fill opacity=0.1, opacity=0.2] (3,3)
        rectangle (0,0);
\end{tikzpicture}
\caption{The 10 separating hyperplanes of $T$ (blue) and the wild cone (red).}\label{fig:sepExample}
\end{figure}
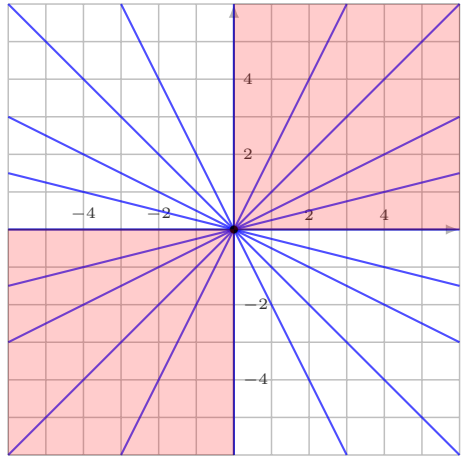

  \noindent\textbf{Case 1:} $a$ and $b$ are odd.\\
  \noindent In this case the minimal non-zero absolute value of $\Phi$ on $(1,0) + 2\bbZ^2$ and $(0,1)+2\bbZ^2$ is $1$ and
   the minimal non-zero absolute value of $\Phi$ on  $(3,1) + 2\bbZ^2$ is $2$.
   The form $\Phi$ is separating, exactly if the inequalities $0 \leq a < 3$, $|b| < 3$ and $|3a+b|<18$ are satisfied.
   We find two separating hyperplanes by taking $a=1$ and $b=\pm 1$.
   
   \noindent\textbf{Case 2:} $a$ is odd, $b$ is even.\\
   \noindent In this case the minimal non-zero absolute value of $\Phi$ on the $2\bbZ^2$-cosets of $(1,0)$ and $(3,1)$ is $1$. However, the minimal
   value attained on the coset of $(0,1)$ equals $2$.
   We obtain the inequalities $0 \leq a < 3$, $|b|<6$ and $|3a+b|<9$. We find five separating hyperplanes with $a=1$ and $b=0,\pm2, \pm4$.
   
  \noindent\textbf{Case 3:} $a$ is even, $b$ is odd.\\
  \noindent As above we find  $a = 0, 2$ and $b=\pm 1$. Both forms with $a=0$ define the same hyperplane and so we obtain three new separating hyperplanes.
  
  Observe that it is not possible that $a$ and $b$ are even, since
  $\gcd(a,b)=1$. In total there are 10 separating hyperplanes (see Figure \ref{fig:sepExample}).

\end{example}
Regarding the premise of Proposition \ref{prop:separating}, the following observation is useful.
\begin{lemma}
 If $T_{d,m,r}$ is of relatively prime type, then $S_r$ spans $E$ as $\bbR$-vector space.
\end{lemma}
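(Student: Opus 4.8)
The plan is to argue by contradiction. Suppose $S_r$ fails to span $E$ over $\bbR$. Since every $r_\omega$ lies in $\Lambda \subseteq E_\bbQ$, the $\bbR$-span of $S_r$ then meets $E_\bbQ$ in a proper $\bbQ$-subspace, so there is a nonzero $\bbQ$-linear form on $E_\bbQ$ vanishing on all of $S_r$. I rescale it to a \emph{primitive} form $\Phi\colon E_\bbQ \to \bbQ$ with $\Phi(\Lambda) = \bbZ$, so that $\Phi(r_\omega) = 0$ for every coset $\omega$.

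The key step exploits the defining congruence together with relative primality. Fix a coset $\omega \in \Lambda/d\Lambda$ and a representative $x \in \omega$. The condition $m_\omega \omega + r_\omega \equiv 0 \bmod d\Lambda$ means precisely that $m_\omega x + r_\omega \in d\Lambda$. Applying $\Phi$ and using $\Phi(d\Lambda) = d\bbZ$ yields $m_\omega \Phi(x) + \Phi(r_\omega) \equiv 0 \pmod d$, hence $m_\omega \Phi(x) \equiv 0 \pmod d$ because $\Phi(r_\omega) = 0$. Now relative primality enters: since $\gcd(m_\omega, d) = 1$, the integer $m_\omega$ is invertible modulo $d$, and I conclude $\Phi(x) \equiv 0 \pmod d$.

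The final step is to let $\omega$ and $x$ vary. As $\omega$ ranges over all of $\Lambda/d\Lambda$ and $x$ over all representatives, every element of $\Lambda$ occurs, so the previous congruence says $\Phi(x) \equiv 0 \pmod d$ for all $x \in \Lambda$; that is, $\Phi(\Lambda) \subseteq d\bbZ$. This contradicts $\Phi(\Lambda) = \bbZ$, as $d > 1$. Therefore no such form exists and $S_r$ spans $E$.

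I expect the only delicate point to be the reduction to a primitive integral form at the outset: one must observe that non-spanning over $\bbR$ forces non-spanning over $\bbQ$ (valid because $S_r \subseteq E_\bbQ$), and that a primitive $\bbQ$-form with $\Phi(\Lambda) = \bbZ$ then exists. Everything afterwards is a short congruence computation, with relative primality used exactly once, to cancel $m_\omega$ modulo $d$.
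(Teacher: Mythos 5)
Your proof is correct. The underlying mechanism is the same as the paper's -- relative primality forces the shift vectors to generate $\Lambda$ modulo $d\Lambda$, since $\omega \equiv -m_\omega^{-1} r_\omega \bmod d\Lambda$ -- but you package it dually. The paper works directly with the sublattice $\Gamma \subseteq \Lambda$ generated by $S_r$, observes that $\Gamma \to \Lambda/d\Lambda$ is surjective, and concludes that $\Gamma$ has full rank; you instead argue by contradiction with a primitive integral linear form $\Phi$ annihilating $S_r$, and show $\Phi(\Lambda) \subseteq d\bbZ$, contradicting $\Phi(\Lambda) = \bbZ$. The two steps you flag as delicate (passing from non-spanning over $\bbR$ to non-spanning over $\bbQ$ because $S_r \subseteq E_\bbQ$, and rescaling to get $\Phi(\Lambda) = \bbZ$ since $\Phi(\Lambda)$ is a nonzero finitely generated subgroup of $\bbQ$) are both handled correctly. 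The paper's direct version is marginally shorter and yields the slightly stronger statement that $\Gamma$ has finite index in $\Lambda$; your contrapositive via a dual vector costs one extra reduction but uses only the congruence computation and no structure theory of the quotient $\Lambda/\Gamma$.
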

\begin{proof}
 Consider the $\bbZ$-lattice $\Gamma$ generated by the set $S_r$. The lattice $\Gamma$ is clearly a sublattice of $\Lambda$.
 Since $T_{d,m,r}$ is relatively prime, we have $\omega \equiv - m_\omega^{-1} r_\omega \bmod d$.
 Thus the map $\Gamma \to \Lambda/d\Lambda$ is surjective. We deduce that $\Gamma$ is a sublattice of $\Lambda$ of full rank $e$; in particular, 
 it spans $E$ as vector space over $\bbR$.
\end{proof}

Finally, we are able to construct various $T$-invariant subsets.
\begin{lemma}\label{lem:sepCone}
  Let $\Phi_1, \dots, \Phi_\ell$ be linear forms on $E$ and assume that each $\Phi_i$ is positive or separating for $T$.
  Then the set
  \begin{equation*}
      C = \{\:x \in \Lambda\:|\: \Phi_i(x) > 0 \text{ for all } i=1,\dots,\ell \:\}
  \end{equation*}
  is $T$-invariant.
\end{lemma}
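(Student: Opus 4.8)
The plan is to verify directly that $T(C) \subseteq C$; equivalently, that $\Phi_i(T(x)) > 0$ for every $x \in C$ and every index $i$. So I would fix such an $x$, set $\omega = x + d\Lambda$, and use the defining formula $T(x) = \frac{1}{d}(m_\omega x + r_\omega)$. Applying the linear form $\Phi_i$ gives the identity (exactly the one step underlying \eqref{eq:stepTrajectory})
\[
  \Phi_i(T(x)) = \frac{m_\omega}{d}\,\Phi_i(x) + \frac{1}{d}\,\Phi_i(r_\omega),
\]
in which $m_\omega > 0$, $d > 0$, and, because $x \in C$, also $\Phi_i(x) > 0$. The whole statement then reduces to controlling the sign of the right-hand side, and I would split into the two cases allowed by the hypothesis.

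If $\Phi_i$ is positive for $S_r$, then $\Phi_i(r_\omega) \geq 0$, so the second summand is nonnegative while the first is strictly positive; hence $\Phi_i(T(x)) > 0$. If instead $\Phi_i$ is separating, I would invoke the defining inequality: since $\Phi_i(x) \neq 0$, the separating property yields $|m_\omega \Phi_i(x)| > |\Phi_i(r_\omega)|$. As $m_\omega \Phi_i(x) > 0$, the triangle inequality gives
\[
  m_\omega \Phi_i(x) + \Phi_i(r_\omega) \geq m_\omega \Phi_i(x) - |\Phi_i(r_\omega)| = |m_\omega \Phi_i(x)| - |\Phi_i(r_\omega)| > 0,
\]
and dividing by $d$ gives $\Phi_i(T(x)) > 0$ once more. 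This is just the sign-preservation already observed after the definition of separating forms, specialized to $\Phi_i(x) > 0$.

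Since the conclusion holds for each $i$ simultaneously, we obtain $T(x) \in C$, which establishes the $T$-invariance of $C$. I expect no genuine obstacle here: the only point demanding care is the separating case, where one must track signs and use that $m_\omega \Phi_i(x)$ is strictly \emph{positive} (not merely nonzero) in order to absorb the possibly negative contribution $\Phi_i(r_\omega)$ via the separating inequality. The standing assumption $m_\omega > 0$ from the setup is exactly what makes both the positive case and this absorption go through cleanly; with some $m_\omega < 0$ one would have to carry the absolute-value form of the estimate throughout.
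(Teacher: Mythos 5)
Your argument is correct and is essentially the paper's own proof: both apply $\Phi_i$ to the defining formula for $T(x)$ to get $\Phi_i(T(x)) = \frac{m_\omega}{d}\Phi_i(x) + \frac{1}{d}\Phi_i(r_\omega)$ and then conclude positivity by the two cases (positive form, separating form). You merely spell out the case analysis that the paper leaves implicit, including the correct use of $m_\omega > 0$ in the separating case.
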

\begin{proof}
 Let $x \in C$ and put $\omega = x+ d\Lambda$.
 The formula $\Phi_i(T(x)) = \frac{m_\omega}{d} \Phi_i(x) + \frac{1}{d} \Phi_i(r_\omega)$ 
 yields $\Phi_i(T(x))>0$ whenever $\Phi_i(x) > 0$ using either that $\Phi$ is positive or that $\Phi$ is separating.
\end{proof}

Assume that $T = T_{d,m,r}$ is of relatively prime type and that $S_r$ is acute. 
Proposition \ref{prop:separating} implies that there are finitely
many separating hyperplanes $H_1,\dots,H_k$.
We decompose the complement $E \setminus \bigcup_{j=1}^k H_j$ into its connected components
\begin{equation}\label{eq:decomposition}
 E \setminus \bigcup_{j=1}^k H_j = \bigcup_{i=1}^N C_i
\end{equation}
where the components $C_1, \dots, C_N$ are open convex cones.
As before $B_r^+$ and $B_r^-$ denote the closed convex cones generated by $S_r$ and $-S_r$ respectively.
We say that the cone $C_i$ is \emph{wild} if $C_i \cap B^+_r \neq \emptyset$ and we define the
\emph{wild cone} $W_T$ of $T$ as
\begin{equation*}
   W_T = B_r^- \cup \bigcup_{\substack{i=1 \\ C_i \text{ wild }}}^N \overline{C_i}.
\end{equation*}
The complement $D_T$ of the wild cone $W_T$ in $E$ will be called the \emph{tame cone}.
\begin{theorem}\label{thm:divergent}
  Assume that $T$ is of relatively prime type and that $S_r$ is acute.
  Every element $x \in \Lambda \cap D_T$ has a divergent $T$-trajectory
\end{theorem}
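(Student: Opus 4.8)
The plan is to derive the theorem from Proposition \ref{prop:InvariantDirected}. For a given $x \in \Lambda \cap D_T$ I would produce an open cone $\tilde C \ni x$ that is cut out by finitely many positive and separating forms — so that $\tilde C \cap \Lambda$ is $T$-invariant by Lemma \ref{lem:sepCone} — and that consists \emph{entirely} of directed points. Granting this, the trajectory of $x$ never leaves $\tilde C \cap \Lambda$, so Proposition \ref{prop:InvariantDirected} makes it divergent. Since, by Lemma \ref{lem:directed}, being directed is the same as lying outside $B_r^+ \cup B_r^-$, the whole theorem reduces to constructing such a $\tilde C$ with $\tilde C \cap (B_r^+ \cup B_r^-) = \emptyset$.

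To build $\tilde C$ I would first exploit that $x \in D_T$ forces $x \notin B_r^-$ (as $B_r^- \subseteq W_T$); because $B_r^-$ is the intersection of the closed half-spaces $\{\Phi \le 0\}$ over all forms $\Phi$ positive for $S_r$, there is a positive form $\Phi_0$ with $\Phi_0(x) > 0$. Writing $H_1,\dots,H_k = \ker\Phi_1,\dots,\ker\Phi_k$ for the finitely many separating hyperplanes (Proposition \ref{prop:separating}), I set $N = \{\, j : \Phi_j(x) \neq 0 \,\}$, $\sigma_j = \sign\Phi_j(x)$ for $j \in N$, and define
\[
 R = \{\, y \in E : \sigma_j \Phi_j(y) > 0 \text{ for all } j \in N \,\}, \qquad \tilde C = R \cap \{\, y : \Phi_0(y) > 0 \,\}.
\]
The essential trick is to \emph{discard} the separating hyperplanes passing through $x$: then $x$ satisfies every defining strict inequality, so $x \in \tilde C$, while each $\sigma_j\Phi_j$ ($j \in N$) is separating (being separating depends only on the hyperplane) and $\Phi_0$ is positive, so Lemma \ref{lem:sepCone} yields that $\tilde C \cap \Lambda$ is $T$-invariant.

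It remains to check $\tilde C \cap (B_r^+ \cup B_r^-) = \emptyset$. Disjointness from $B_r^-$ is immediate, since $\Phi_0 \le 0$ on $B_r^-$ but $\Phi_0 > 0$ on $\tilde C$. The substantial point is $R \cap B_r^+ = \emptyset$, and this is the step I expect to be the main obstacle. The mechanism is: any open chamber $C$ of the arrangement contained in $R$ has $x$ in its closure (on the hyperplanes through $x$ the defining inequalities become equalities, and on the others the signs match the $\sigma_j$), so $x \in \overline C$; since $x \in D_T$ lies in the closure of no wild chamber, $C$ must be tame, i.e.\ $C \cap B_r^+ = \emptyset$. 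To upgrade ``no chamber inside $R$ meets $B_r^+$'' to ``$R$ itself misses $B_r^+$'', I would use that $B_r^+$ is full-dimensional, which holds because $T$ is of relatively prime type and hence $S_r$ spans $E$. Indeed, if some $p \in R \cap B_r^+$ existed, then, $R$ being open and $p$ lying in the closure of the nonempty interior of the convex cone $B_r^+$, I could choose an interior point $q$ of $B_r^+$ lying in $R$ and off all hyperplanes; its chamber $C$ would satisfy $C \subseteq R$ (the signs of the $\Phi_j$, $j \in N$, are constant on the connected $C$) and meet $B_r^+$, hence be wild — contradicting the previous sentence. Therefore $R \cap B_r^+ = \emptyset$, so $\tilde C$ consists of directed points, and Proposition \ref{prop:InvariantDirected} completes the argument.
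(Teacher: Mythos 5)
Your proof is correct and takes essentially the same route as the paper: a positive form $\Phi_0$ witnessing $x \notin B_r^-$, together with the separating forms that do not vanish at $x$, cuts out a $T$-invariant open cone of directed points containing $x$, and Lemma \ref{lem:sepCone} plus Proposition \ref{prop:InvariantDirected} conclude. The only difference is that you carefully justify the step the paper merely asserts (that this cone can be chosen to miss $B_r^+$), via the chamber-closure argument and the full-dimensionality of $B_r^+$; this fills in detail rather than changing the argument.
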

\begin{proof}
  By construction, $D_T$ is a subset of $E \setminus B_r^+ \cup B_r^-$, hence by Lemma \ref{lem:directed} the set $D_T \cap \Lambda$ consists of directed points.
  According to Proposition \ref{prop:InvariantDirected} it suffices to show that $D_T \cap \Lambda$ is $T$-invariant.
  More precisely, we show that every element $x \in D_T\cap\Lambda$ lies in a $T$-invariant cone.
  
  Let $x \in D_T \cap \Lambda$. 
  Since $x$ is not in $B_r^-$, we can find a linear form $\Psi \colon E \to \bbR$ with $\Psi(x) > 0$ and $\Psi(-r_\omega) \leq  0$ for all $\omega \in \Lambda/d\Lambda$.
  In particular, $\Psi$ is a positive form for~$S_r$.
  Moreover, as $x$ does not lie in the closure of a wild cone, we can find
  separating linear forms $\Phi_1,\dots, \Phi_\ell$ such that
  the associated open positive cone contains $x$ but does not intersect $B_r^+$.
  By construction $x$ lies in the cone 
  \begin{equation*}
  C = \{\:y \in E \:|\: \Psi(y) > 0 \text{ and } \Phi_i(x) > 0 \text{ for all } i=1,\dots,\ell \:\} \subseteq D_T.
  \end{equation*}
  Lemma \ref{lem:sepCone} shows that $C \cap \Lambda$ is $T$-invariant, which finishes the proof.
\end{proof}

\subsection{Asymptotic density}
In order to understand \emph{how many} points of $\Lambda$ 
have a divergent $T$-trajectory, we consider the asymptotic density.
We fix some norm $\Vert \cdot \Vert$ on $E$.
The closed ball of radius $\rho$ around $0 \in E$ will be denoted by $\beta(\rho)$. 

For a subset $Z \subseteq \Lambda$ we define the asymptotic lower and upper density by
\begin{align*}
   \underline{\delta}(Z) = \liminf_{n \to \infty} \frac{|Z \cap \beta(n)|}{|\Lambda \cap \beta(n)|},\\
   \overline{\delta}(Z) = \limsup_{n \to \infty} \frac{|Z \cap \beta(n)|}{|\Lambda \cap \beta(n)|}.
\end{align*}
If the upper and lower density of $Z$ agree, then $\delta(Z) = \underline{\delta}(Z) = \overline{\delta}(Z)$
is called the asymptotic density of $Z$.

Fix some Lebesgue measure on $E$, which we denote by $\vol$.
The measure is unique up to positive scalars and our results are independent of this choice.
We briefly collect the following lemma about the density of lattice points in convex cones.
\begin{lemma}\label{lem:ConeDensity}
 Let $C \subseteq E$ be a closed convex cone and let $C^{0}$ denote its interior.
 The asymptotic densities of $\Lambda \cap C$ and $\Lambda \cap C^{0}$ exist and
 \begin{equation*}
    \delta(\Lambda \cap C) = \delta(\Lambda \cap C^{0}) = \frac{\vol\bigl(C\cap \beta(1)\bigr)}{\vol\bigl(\beta(1)\bigr)}.
 \end{equation*}
\end{lemma}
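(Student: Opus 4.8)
The plan is to reduce the whole statement to one standard lattice-point counting estimate for convex bodies and then pass to a quotient. The crucial structural observation is a \emph{homogeneity}: since $C$ is a cone (invariant under multiplication by positive scalars) and $\beta(n) = n\,\beta(1)$, we have $C \cap \beta(n) = n\bigl(C \cap \beta(1)\bigr)$. Writing $K = C \cap \beta(1)$, this set is the intersection of the convex cone $C$ with the convex body $\beta(1)$, hence $K$ is itself a bounded convex subset of $E$. (If $K$ has empty interior, then $K$ lies in a hyperplane, both sides of the asserted equality are zero, and the corresponding counts are $O(n^{e-1})$; so I may assume $K$ is a genuine convex body.) Thus the two quantities to be compared are $|\Lambda \cap nK|$ and $|\Lambda \cap n\beta(1)|$.

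First I would establish the counting estimate that drives everything: for any bounded convex set $K \subseteq E$,
\begin{equation*}
   |\Lambda \cap nK| = \frac{\vol(K)}{\Delta}\, n^e + O\bigl(n^{e-1}\bigr),
\end{equation*}
where $\Delta = \vol(P)$ is the volume of a fundamental parallelepiped $P$ of $\Lambda$. To prove this I would tile $E$ by the translates $P + \lambda$ with $\lambda \in \Lambda$, assign to each lattice point its cell, and compare $\Delta \cdot |\Lambda \cap nK|$ with $\vol(nK) = n^e \vol(K)$. The discrepancy is bounded by $\Delta$ times the number of cells $P + \lambda$ that meet the boundary $\partial(nK)$, since cells entirely inside $nK$ contribute their full volume and cells entirely outside contribute nothing.

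The main obstacle is exactly this boundary count, and here convexity of $K$ does the work. The boundary $\partial K$ of a convex body has finite $(e-1)$-dimensional Hausdorff measure (its surface area), and under dilation by $n$ the boundary $\partial(nK)$ has $(e-1)$-measure $O(n^{e-1})$. Every cell meeting $\partial(nK)$ lies within distance $\operatorname{diam}(P)$ of this boundary, so a standard covering argument bounds the number of such cells by $O(n^{e-1})$. This yields the claimed error term $O(n^{e-1}) = o(n^e)$.

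Applying the estimate to both $K = C \cap \beta(1)$ and $\beta(1)$ and taking the quotient gives
\begin{equation*}
   \frac{|\Lambda \cap C \cap \beta(n)|}{|\Lambda \cap \beta(n)|}
   = \frac{\vol(C \cap \beta(1))\, n^e/\Delta + O(n^{e-1})}{\vol(\beta(1))\, n^e/\Delta + O(n^{e-1})}
   \longrightarrow \frac{\vol\bigl(C \cap \beta(1)\bigr)}{\vol\bigl(\beta(1)\bigr)},
\end{equation*}
which proves both existence of $\delta(\Lambda \cap C)$ and its stated value. For the interior version, note that $\partial C$ is contained in the boundary of the convex cone $C$ and therefore has Lebesgue measure zero, so $\vol(C^0 \cap \beta(1)) = \vol(C \cap \beta(1))$. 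Finally, the lattice points counted by the difference $|\Lambda \cap C \cap \beta(n)| - |\Lambda \cap C^0 \cap \beta(n)|$ all lie on the Lipschitz boundary $\partial C$ inside $\beta(n)$; the same boundary-cell count shows there are only $O(n^{e-1}) = o(n^e)$ of them. Hence the two densities coincide and equal the common volume ratio.
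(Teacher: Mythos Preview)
Your proof is correct and follows essentially the same route as the paper: both use the homogeneity $C \cap \beta(n) = n\,(C \cap \beta(1))$ to reduce to a standard lattice-point count in dilates of a bounded convex set, then pass to the interior by observing the boundary contributes only $o(n^e)$ points. The only cosmetic difference is that the paper cites Jordan measurability and a reference (Gruber) for the counting estimate, whereas you sketch the tiling/boundary argument directly and bound the $C$-versus-$C^0$ discrepancy by the boundary count rather than by re-invoking Jordan measurability.
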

\begin{proof}
 The set $C_1 = C \cap \beta(1)$ is a convex body and is therefore Jordan measurable (see Theorem 7.4 in \cite{Gruber2007}).
 Let $F$ be a fundamental cell for $\Lambda$ in $E$ and let $e = \dim_\bbR E$.
 It follows from Section 7.2 in \cite{Gruber2007}, that for every integer $n \geq 1$
 \begin{align*}
      |\Lambda \cap C \cap \beta(n)| = n^e\frac{\vol(C_1)}{\vol(F)} + o(n^e), \\
      |\Lambda \cap \beta(n)| = n^e\frac{\vol(\beta(1))}{\vol(F)} + o(n^e).
 \end{align*}
 We deduce $\delta(\Lambda \cap C) = \frac{\vol(C_1)}{\vol(\beta(1))}$ from these equalities.
 
 Now we consider $\Lambda \cap C^{0}$.
 The interior of a Jordan measurable set is Jordan measurable with same measure. 
 Consequently, the set $C^{*}_1 = C^{0} \cap \beta(1)$ is Jordan measurable with $\vol(C^{*}_1) = \vol(C_1)$,
 as it is sandwiched between to $C_1$ and its interior. Note that, the Jordan measure is the Lebesgue measure on Jordan measurable sets.
 The same argument as above yields $\delta(\Lambda \cap C^{0}) = \frac{\vol(C^{*}_1)}{\vol(\beta(1))}$.
\end{proof}

\begin{theorem}\label{thm:divDensity}
  Assume that $T$ is of relatively prime type and that $S_r$ is acute.
  The set $\Diver(T)$ of points in $\Lambda$ which have a divergent $T$-trajectory satisfies
  \begin{equation*}
      \underline{\delta}(\Diver(T)) \geq \frac{\vol(D_T\cap\beta(1))}{\vol(\beta(1))},
  \end{equation*}
  where $D_T$ denotes the tame cone of $T$ defined in Section \ref{sec:separatingHyperplanes}.
\end{theorem}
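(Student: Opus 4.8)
The plan is to combine Theorem~\ref{thm:divergent} with the lattice point count underlying the proof of Lemma~\ref{lem:ConeDensity}. By Theorem~\ref{thm:divergent} every point of $\Lambda \cap D_T$ has a divergent trajectory, so $\Lambda \cap D_T \subseteq \Diver(T)$ and hence $\underline{\delta}(\Diver(T)) \geq \underline{\delta}(\Lambda \cap D_T)$. It therefore suffices to show that the density $\delta(\Lambda \cap D_T)$ exists and equals $\vol(D_T \cap \beta(1))/\vol(\beta(1))$.

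The obstacle is that $D_T$ is neither closed nor convex, so Lemma~\ref{lem:ConeDensity} cannot be applied verbatim. My claim is that $D_T$ nevertheless keeps the only two features the asymptotic actually uses: it is a cone, and $D_T \cap \beta(1)$ is Jordan measurable. For the first point, $W_T = B_r^- \cup \bigcup_{C_i \text{ wild}} \overline{C_i}$ is a finite union of cones, hence a cone; its complement $D_T$ is then a cone as well, since $x \notin W_T$ and $\lambda > 0$ force $\lambda x \notin W_T$. Consequently $D_T \cap \beta(n) = n\,(D_T \cap \beta(1))$ for every $n \geq 1$. For the second point, $B_r^- \cap \beta(1)$ and each $\overline{C_i} \cap \beta(1)$ are convex bodies and thus Jordan measurable; a finite union of Jordan measurable sets is Jordan measurable, so $W_T \cap \beta(1)$ is Jordan measurable, and therefore so is the difference $D_T \cap \beta(1) = \beta(1) \setminus (W_T \cap \beta(1))$.

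With these two observations I would repeat the argument from the proof of Lemma~\ref{lem:ConeDensity}. Applying the lattice point count of Section~7.2 in \cite{Gruber2007} to the Jordan measurable set $D_T \cap \beta(1)$, together with the scaling identity above, yields
\begin{equation*}
   |\Lambda \cap D_T \cap \beta(n)| = n^e \frac{\vol(D_T \cap \beta(1))}{\vol(F)} + o(n^e),
\end{equation*}
where $F$ is a fundamental cell for $\Lambda$ and $e = \dim_\bbR E$. Dividing by $|\Lambda \cap \beta(n)| = n^e \vol(\beta(1))/\vol(F) + o(n^e)$ shows that $\delta(\Lambda \cap D_T)$ exists and equals $\vol(D_T \cap \beta(1))/\vol(\beta(1))$; combined with $\Lambda \cap D_T \subseteq \Diver(T)$ this gives the stated lower bound.

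The step I expect to require the most care is the Jordan measurability of $D_T \cap \beta(1)$. The point is that $W_T \cap \beta(1)$ must be built from only \emph{finitely many} convex bodies, so that taking the complement inside $\beta(1)$ stays within the class of Jordan measurable sets; this is exactly where the finiteness of the set of separating hyperplanes (Proposition~\ref{prop:separating}), and hence of the family of wild cones, is indispensable. I would emphasize that, unlike for a single cone, one cannot reduce to $D_T$ being convex, so invoking Jordan measurability of the finite Boolean combination directly is the crux of the argument.
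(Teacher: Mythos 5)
Your proposal is correct, and its first step --- using Theorem~\ref{thm:divergent} to get $\Lambda \cap D_T \subseteq \Diver(T)$ and thereby reducing everything to the density of $\Lambda \cap D_T$ --- is exactly the paper's. Where you genuinely differ is in how that density is computed. The paper keeps Lemma~\ref{lem:ConeDensity} as its only counting input: it writes
\begin{equation*}
 \delta(D_T\cap \Lambda) = \sum_{\substack{i=1 \\ C_i \text{ not wild }}}^N \Bigl(\delta(C_i\cap \Lambda) - \delta(B^-_r \cap C_i \cap \Lambda)\Bigr)
\end{equation*}
and applies the lemma to each convex piece, implicitly using that the separating hyperplanes carry density zero and that the sets $C_i \setminus B_r^-$ over non-wild $i$ exhaust $D_T$ up to measure zero. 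You instead treat $D_T$ wholesale: it is a cone whose unit-ball section is Jordan measurable as a finite Boolean combination of convex bodies, so the lattice-point count from Section~7.2 of \cite{Gruber2007} can be run once on $D_T \cap \beta(1)$ directly. Both arguments rest on the same asymptotic $|\Lambda \cap nS| = n^e \vol(S)/\vol(F) + o(n^e)$; yours invokes it for a general Jordan measurable $S$ rather than only for convex bodies (which the cited source does cover), and in exchange avoids the inclusion--exclusion bookkeeping. Your closing remark that the finiteness of the family of separating hyperplanes (Proposition~\ref{prop:separating}) is what keeps $W_T \cap \beta(1)$ inside the class of Jordan measurable sets identifies the right crux; the same finiteness is what makes the paper's sum over non-wild components legitimate.
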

\begin{proof}
 By Theorem \ref{thm:divergent} we have $D_T \cap \Lambda \subseteq \Diver(T)$, so it suffices to compute the
 density of $D_T \cap \Lambda$.
 Now we apply Lemma \ref{lem:ConeDensity} and use the fact that $D_T$ is defined in terms of convex cones to deduce
 \begin{align*}
   \delta(D_T\cap \Lambda) &= \sum_{\substack{i=1 \\ C_i \text{ not wild }}}^N \delta(C_i\cap \Lambda) - \delta(B^-_r \cap C_i \cap \Lambda)\\
                         &= \sum_{\substack{i=1 \\ C_i \text{ not wild }}}^N \frac{\vol\bigl( (C_i \setminus B_r^-)\cap \beta(1)\bigr)}{\vol\bigl(\beta(1)\bigr)}
                         = \frac{\vol(D_T\cap\beta(1))}{\vol(\beta(1))}.
\end{align*}
\end{proof}

\begin{example}\label{theExample4}
 We return to the Collatz mapping $T$ discussed in the Examples \ref{theExample}, \ref{theExample2} and \ref{theExample3}.
 The tame cone $D_T$ consists of the open second and fourth quadrants, whereas the wild cone consists of the closed first and third quadrant (see Figure \ref{fig:sepExample}).
 In particular, we obtain $\underline{\delta}(\Diver(T)) \geq \frac{1}{2}$.
\end{example}

\section{Points with finite stopping time}\label{sec:stoppingTime}
Let $\Lambda\subseteq E$ be a lattice in a normed vector space of dimension $\dim_\bbR E = e$.
A point $x \in \Lambda$ is said to have \emph{finite stopping time} for $T$ if
there is some integer $k > 1$ such that $\left\Vert T^k(x)\right\Vert < \left\Vert x\right\Vert$.
We denote the set of points with finite stopping time in $\Lambda$ by $\stp(T)$.

For the original $3x+1$-mapping Terras \cite{Terras1976} and Everett \cite{Everett1977} showed that $\stp(T)$ has asymptotic density
$\delta(\stp(T)) = 1$. Generalizing the work of M\"oller \cite{Moller1978} and
Heppner \cite{Heppner1979} to the higher dimensional setting one obtains the following theorem.
\begin{theorem}\label{thm:stoppingTime}
 Let $T = T_{d,m,r}$ be a generalized Collatz mapping of relatively prime type on a lattice $\Lambda \subseteq E$ with $\dim_\bbR E = e$ and assume that
 \begin{equation*}
     \prod_{\omega \in \Lambda/d\Lambda} m_\omega < d^{d^e}.
 \end{equation*}
 Then the set $\stp(T)$ has asymptotic density $\delta(\stp(T)) = 1$.
\end{theorem}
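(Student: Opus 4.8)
The plan is to reduce the statement to a counting problem about the coset sequences encountered along trajectories and then to invoke the law of large numbers. For a point $x \in \Lambda$ write $\omega_i = T^i(x) + d\Lambda$ for the $i$-th coset in its trajectory. Along the length-$k$ coset sequence $\sigma = (\omega_0,\ldots,\omega_{k-1})$ the iterate $T^k$ is the affine map
\[
   T^k(x) = \frac{M(\sigma)}{d^k}\, x + c(\sigma), \qquad M(\sigma) = \prod_{i=0}^{k-1} m_{\omega_i},
\]
with a constant vector $c(\sigma) \in E$ depending only on $\sigma$ (it is the usual telescoping sum of shift terms). The first and structurally most important step is to show that the map sending $x$ to its length-$k$ coset sequence induces a bijection $\Lambda/d^k\Lambda \to (\Lambda/d\Lambda)^k$. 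Since $T$ is of relatively prime type, multiplication by each $m_\omega$ is invertible on $\Lambda/d\Lambda \cong (\bbZ/d\bbZ)^e$; an induction on $k$ (mimicking the computation already used in Example \ref{theExample} and the proof of Lemma \ref{lem:sepCone}) then shows that, once $\omega_0,\ldots,\omega_{i-1}$ are prescribed, the next coset $\omega_i$ runs bijectively through $\Lambda/d\Lambda$ as $x$ runs through the relevant residue class modulo $d^{i+1}\Lambda$. Hence each $\sigma$ corresponds to exactly one residue class modulo $d^k\Lambda$, which has asymptotic density $d^{-ke} = 1/[\Lambda:d^k\Lambda]$ by the lattice point count underlying Lemma \ref{lem:ConeDensity}.

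Next I would convert the size condition on trajectories into a condition on $\sigma$. Because there are only finitely many coset sequences of fixed length $k$, the constants $c(\sigma)$ are uniformly bounded, so there is a radius $R_k$ such that every $x$ with $\Vert x\Vert > R_k$ whose coset sequence $\sigma$ satisfies $M(\sigma) < d^k$ already has $\Vert T^k(x)\Vert < \Vert x\Vert$, and thus finite stopping time. Discarding the finitely many lattice points inside $\beta(R_k)$, which do not affect density, and using that the good residue classes form a finite disjoint union, we obtain for every $k$
\[
   \underline{\delta}(\stp(T)) \geq \frac{\bigl|\{\,\sigma \in (\Lambda/d\Lambda)^k : M(\sigma) < d^k\,\}\bigr|}{d^{ke}}.
\]
It therefore suffices to prove that the right-hand side tends to $1$ as $k \to \infty$.

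Finally I would recognize this ratio as a probability and estimate it via the law of large numbers. Taking logarithms, the inequality $M(\sigma) < d^k$ reads $\frac{1}{k}\sum_{i=0}^{k-1}\log m_{\omega_i} < \log d$. The hypothesis $\prod_{\omega} m_\omega < d^{d^e}$ is precisely the assertion that the mean value $\mu = d^{-e}\sum_{\omega}\log m_\omega$ is \emph{strictly} smaller than $\log d$. By the bijection of the first step, each sequence $\sigma$ carries equal weight $d^{-ke}$, so the displayed ratio is exactly the probability that $\frac{1}{k}\sum_{i=0}^{k-1}\log m_{\omega_i} < \log d$ when $\omega_0,\ldots,\omega_{k-1}$ are independent and uniform on $\Lambda/d\Lambda$. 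The variables $\log m_{\omega_i}$ are then i.i.d., bounded, with mean $\mu$, and the weak law of large numbers (or an explicit Chebyshev bound, using boundedness) shows that their average exceeds $\log d$ with probability tending to $0$. Hence the density of good coset sequences tends to $1$, giving $\underline{\delta}(\stp(T)) = 1$; since trivially $\overline{\delta}(\stp(T)) \leq 1$, the density exists and equals $1$. I expect the main obstacle to be the bookkeeping in the bijection $\Lambda/d^k\Lambda \cong (\Lambda/d\Lambda)^k$, where the relatively prime hypothesis is indispensable and where the passage from $\bbZ$ to a lattice of rank $e$ accounts for the exponent $d^e$ in the bound; everything downstream is a clean concentration argument once this equidistribution of coset sequences is established.
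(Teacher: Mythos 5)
Your proposal is correct and follows essentially the same route as the paper: the bijection $\Lambda/d^k\Lambda \cong (\Lambda/d\Lambda)^k$ via coset sequences (the paper's Lemma \ref{lem:stopping}, relying on the relatively prime hypothesis), the explicit affine form of $T^k$ giving a uniform radius beyond which $M(\sigma)<d^k$ forces $\Vert T^k(x)\Vert<\Vert x\Vert$, the resulting bound $\underline{\delta}(\stp(T))\geq |A_k|/d^{ke}$, and the weak law of large numbers applied to the i.i.d.\ variables $\log m_{\omega_i}$ with mean $\mu<\log d$. No substantive differences.
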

 The proof is a straightforward generalization of the one dimensional case.
 We follow along the lines of Everett \cite{Everett1977} and Heppner \cite{Heppner1979} to provide a short sketch.
 
 \begin{lemma}\label{lem:stopping}
   Suppose $T$ is of relatively prime type.
   Let $x_i = T^i(x)$ denote the $T$-trajectory of $x \in \Lambda$ and let $\omega(x,i) = x_i + d\Lambda$ denote the residual sequence. 
   \begin{enumerate}[(i)]
    \item\label{eq:formula} $x_k = d^{-k}\Bigl( (\prod_{i = 0}^{k-1} m_{\omega(x,i)})x + \sum_{j=0}^{k-1} (\prod_{i>j}^{k-1} m_{\omega(x,i)}) d^j r_{\omega(x,j)} \Bigr)$
            for all $k \geq 0$.
    \item\label{eq:congruence} For all $x, y \in \Lambda$ the congruence $x \equiv y \mod d^k \Lambda$ holds if and only if $\omega(x,i) = \omega(y,i)$ for all $i=0,\dots,k-1$.
    \item The map $\Omega_k \colon \Lambda/d^k\Lambda \to (\Lambda/d\Lambda)^k$ with $x+d^k\Lambda \mapsto (\omega(x,0),\dots \omega(x,k-1))$ is a well-defined bijection.
   \end{enumerate}
 \end{lemma}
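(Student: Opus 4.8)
The plan is to handle the three parts in order: \ref{eq:formula} and \ref{eq:congruence} each by induction on $k$, and (iii) as a formal consequence of \ref{eq:congruence} together with a cardinality count.

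For part \ref{eq:formula}, I would induct on $k$. The case $k=0$ reads $x_0 = x$, with the convention that the empty product equals $1$ and the empty sum equals $0$. For the inductive step I substitute the assumed formula for $x_k$ into the recursion $x_{k+1} = \frac{1}{d}(m_{\omega(x,k)} x_k + r_{\omega(x,k)})$, which is valid since $x_k \in \omega(x,k)$. Multiplying by $m_{\omega(x,k)}$ absorbs this factor into the product multiplying $x$ and into each $\prod_{i>j}^{k-1} m_{\omega(x,i)}$, turning them into products running up to $k$; the leftover $r_{\omega(x,k)}$, rewritten as $d^{-k}\cdot d^{k} r_{\omega(x,k)}$, becomes precisely the $j=k$ summand (the empty product $\prod_{i>k}^{k}$ being $1$). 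A final division by $d$ yields the formula for $k+1$. This step is pure index bookkeeping.

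The heart of the lemma is part \ref{eq:congruence}, which I would also prove by induction on $k$; the case $k=1$ is just the definition $\omega(x,0) = x + d\Lambda$. The key one-step observation is that if $x \equiv y \bmod d\Lambda$ then $\omega(x,0) = \omega(y,0) =: \omega$, so $T$ applies the same affine map to both and $T(x) - T(y) = \frac{m_\omega}{d}(x-y)$. I would organize the induction around the equivalence
\begin{equation*}
  x \equiv y \bmod d^{k+1}\Lambda \iff \bigl( x \equiv y \bmod d\Lambda \ \text{and}\ T(x) \equiv T(y) \bmod d^{k}\Lambda \bigr).
\end{equation*}
Both implications come from comparing $x - y \in d^{k+1}\Lambda$ with $T(x)-T(y) = \frac{m_\omega}{d}(x-y) \in d^{k}\Lambda$; the nontrivial direction is that $m_\omega(x-y) \in d^{k+1}\Lambda$ forces $x - y \in d^{k+1}\Lambda$, and \emph{this is the only point where the relatively prime type hypothesis is used}: since $\gcd(m_\omega, d) = 1$ the scalar $m_\omega$ is invertible modulo $d^{k+1}$ and may be cancelled. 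I expect this cancellation to be the main (indeed the only non-formal) obstacle. Rewriting $\omega(x,i+1) = \omega(T(x),i)$ via $T^{i+1}(x) = T^i(T(x))$, the induction hypothesis applied to $T(x)$ and $T(y)$ converts $T(x) \equiv T(y) \bmod d^k\Lambda$ into $\omega(x,i) = \omega(y,i)$ for $i = 1, \dots, k$, which together with $\omega(x,0) = \omega(y,0)$ closes the induction.

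For part (iii), well-definedness and injectivity of $\Omega_k$ are immediate from part \ref{eq:congruence}: representatives of one class in $\Lambda/d^k\Lambda$ are congruent mod $d^k\Lambda$ and hence share the same residue sequence, while equal residue sequences force congruence mod $d^k\Lambda$. For surjectivity I would count rather than construct a preimage: both $\Lambda/d^k\Lambda$ and $(\Lambda/d\Lambda)^k$ are finite of the same cardinality $d^{ke}$, with $e$ the rank of $\Lambda$, so an injection between them is automatically a bijection.
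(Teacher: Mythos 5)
Your proposal is correct and follows the same route as the paper: the paper simply states that parts (i) and (ii) follow by induction on $k$ and then deduces (iii) from (ii) via the identical injectivity-plus-cardinality argument, whereas you have written out the inductions in full, correctly isolating the cancellation of $m_\omega$ modulo $d^{k+1}$ (using $\gcd(m_\omega,d)=1$) as the one place the relatively prime hypothesis enters. No gaps.
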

 \begin{proof}
  The first two assertions are easily obtained by induction on $k$. Note that \eqref{eq:congruence} implies that $\Omega_k$ is a well-defined injective map.
  Domain and range of $\Omega_k$ are finite sets with same cardinality, hence $\Omega_k$ is a bijection.
 \end{proof}

 \begin{proof}[Proof of Theorem \ref{thm:stoppingTime}]
  Consider the set $\stp_k(T) = \{\: x \in \Lambda \:|\: \Vert T^k(x) \Vert < \Vert x \Vert\:\}$. Every element in $\stp_k(T)$ has finite stopping time.
  Moreover, we consider the set $A_k \subseteq (\Lambda/d\Lambda)^k$ of sequences $(\omega_0,\dots,\omega_{k-1})$ such that
  \begin{equation*}
     \prod_{i=0}^{k-1} m_{\omega_i} < d^k.
  \end{equation*}
  Suppose $x \in \Lambda$ with $\Omega_k(x+d^k\Lambda) \in A_k$. We show that almost all elements in $y \in x+d^k\Lambda$ satisfy $y \in \stp_k(T)$.
  Indeed, suppose $\Vert y \Vert > \frac{R (M^k - d^k)}{\lambda_k(M-d)}$ where $R = \max_\omega \Vert r_\omega \Vert$, $M = \max_\omega m_\omega$ and
  $\lambda_k = d^k-  \prod_{i=0}^{k-1} m_{\omega(x,i)}$, then by Lemma \ref{lem:stopping} we obtain
  \begin{align*}
     \Vert T^k(y) \Vert &\leq d^{-k} \Vert y \Vert \prod_{i=0}^{k-1} m_{\omega(x,i)}  + d^{-k} \sum_{j=0}^{k-1} M^{k-1-j} d^j R \\
                        &\leq (1 - \lambda_k/d^k) \Vert y \Vert  + \frac{R(M^k-d^k)}{d^k(M-d)}
                        < \Vert y \Vert.
  \end{align*}
  In particular, we may deduce that $\underline{\delta}(\stp_k(T)) > \frac{|A_k|}{d^{ek}}$.
  
  We define $\alpha = \frac{1}{d^e} \sum_{\omega \in \Lambda}\log m_\omega$ and  $\gamma = \log(d) - \alpha$. By assumption $\gamma > 0$.
  Observe that every sequence $(\omega_0,\dots,\omega_{k-1})$ which satisfies
  $| \alpha - \frac{1}{k}\sum_{i = 0}^{k-1} \log(m_{\omega_i}) | < \gamma$ is an element of $A_k$.
  We may consider $X_i = \log m_{\omega_i}$ as independent random variables on the finite uniform probability space $\Lambda/d\Lambda$.
  The weak law of large numbers (see, for instance, Theorem 11.2 in \cite{McCordMoroney}) yields $\lim_{k\to \infty}  \frac{|A_k|}{d^{ek}} = 1$ and
  henceforth $\delta( \stp(T) ) = 1$.
 \end{proof}

\section{Examples with many divergent points}\label{sec:examples}

We consider the Euclidean plane $E = \bbR^2$ with the standard Euclidean norm. Let $\bbZ^2$ be the lattice 
of points with integral coordinates.
Let $d \geq 2$ and $b \geq 1$ be integers. For every pair of integers $i,j \in \{0,\dots,d-1\}$ we define
\begin{align*}
   m_{i,j} = \begin{cases}
                 1      \quad &\text{ if } i=j=0 \\
                 d-1 	\quad &\text{ if } 0< \max(i,j) < d-1\\
                 d+1    \quad &\text{ if } \max(i,j) = d-1.
             \end{cases}
\end{align*}
and shift vectors
\begin{align*}
   r_{i,j} = \begin{cases}
                 0                  \quad &\text{ if } i=j=0 \\
                 (i,  j + bdi)      \quad &\text{ if } 0< \max(i,j) < d-1\\
                 (1, (b+1)d-j)      \quad &\text{ if } i= d-1 \\
                 (d-i, bd(d-i)+1) \quad &\text{ if } j= d-1.\\
             \end{cases}
\end{align*}
Observe that $m_{i,j} (i,j) + r_{i,j} \equiv 0 \bmod d$.
We define the associated generalized Collatz mapping $T\colon \bbZ^2 \to \bbZ^2$ by
\begin{equation}\label{eq:extremalCollatz}
    T(x) = \frac{1}{d}(m_{i,j}x + r_{i,j})  \quad \text{ if } x \equiv (i,j) \bmod d.
\end{equation}

\begin{proposition}\label{prop:extremeExample}
 Let $T$ be the generalized Collatz mapping defined in~\eqref{eq:extremalCollatz}.
 Then
 \begin{equation*}
      \underline{\delta}(\Diver(T))  \geq 1 - \frac{\arccos(\sqrt{1+ (bd)^{-2}}^{-1})}{\pi}.
 \end{equation*}
 In particular, $\underline{\delta}(\Diver(T))$ approaches $1$ as $bd$ tends to infinity.
 If moreover $d \geq 3$, then $\delta(\stp(T)) = 1$.
\end{proposition}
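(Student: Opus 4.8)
The plan is to feed the explicit data $(m_{i,j}, r_{i,j})$ into the two density theorems, Theorem~\ref{thm:divDensity} and Theorem~\ref{thm:stoppingTime}. First note that $T$ is of relatively prime type, since $\gcd(d,1) = \gcd(d,d-1) = \gcd(d,d+1) = 1$, and that $S_r$ is acute: every non-zero shift vector has strictly positive second coordinate, so $\Phi(x,y) = y$ is strictly positive for $S_r$. To apply Theorem~\ref{thm:divDensity} I first locate the cone $B_r^+$. Inspecting the four families of shift vectors shows that each $r_{i,j}$ lies in the closed sector
\[ S = \{(x,y) \in \bbR^2 \mid x \geq 0 \text{ and } y \geq bd\,x \}, \]
whose extreme rays $\bbR_{\geq 0}(0,1)$ and $\bbR_{\geq 0}(1,bd)$ are both attained (by $r_{0,j}$ and, for $d \geq 3$, by $r_{1,0} = (1,bd)$). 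Hence $B_r^+ \subseteq S$, $B_r^- \subseteq -S$, and $S$ has angular width $\tfrac{\pi}{2} - \arctan(bd) = \arctan(1/(bd)) = \arccos(\sqrt{1+(bd)^{-2}}^{-1})$.

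The crux is to show that the two bounding lines of $S$ are separating hyperplanes, i.e.\ that $\Phi_1(x,y) = x$ and $\Phi_2(x,y) = bd\,x - y$ are separating for $T$. For each coset $\omega = (i,j) + d\Lambda$ I would compute the minimal non-zero value of $|\Phi_k|$ on $\omega$ (it depends only on $i \bmod d$ for $\Phi_1$, and on $j \bmod d$ for $\Phi_2$) and verify $m_\omega \cdot (\text{this minimum}) > |\Phi_k(r_\omega)|$, going through the four coset families. The computation is short once one records $\Phi_2(r_{i,j}) = -j$, $\Phi_2(r_{d-1,j}) = j - d$, $\Phi_2(r_{i,d-1}) = -1$ and $\Phi_1(r_{i,j}) \in \{0,1,\dots,d\}$, together with $m_\omega \in \{d-1,d+1\}$ off the zero coset; the margin supplied by the multipliers $d \pm 1$ is exactly what forces the strict inequalities. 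Once $\Phi_1,\Phi_2$ are separating, their kernels occur among the separating hyperplanes of the decomposition \eqref{eq:decomposition}, so every connected component lies in one of the four open sectors cut out by these two lines. A wild component meets $B_r^+ \subseteq S$ but avoids the two lines, hence meets the open sector $S^{\circ}$, so it is contained in $S^{\circ}$ and has closure inside $S$. Therefore $W_T \subseteq (-S) \cup S$ and $D_T \supseteq \bbR^2 \setminus ((-S) \cup S)$. Theorem~\ref{thm:divDensity} together with the area computation $\vol(((-S)\cup S) \cap \beta(1)) / \vol(\beta(1)) = \arctan(1/(bd))/\pi$ then yields
\[ \underline{\delta}(\Diver(T)) \geq 1 - \frac{\arctan(1/(bd))}{\pi} = 1 - \frac{\arccos(\sqrt{1+(bd)^{-2}}^{-1})}{\pi}, \]
which tends to $1$ as $bd \to \infty$.

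For the stopping-time claim I would invoke Theorem~\ref{thm:stoppingTime} with $e = 2$. Sorting the $d^2$ cosets by the value of $m_{i,j}$ gives one coset with $m = 1$, exactly $(d-1)^2 - 1 = d^2 - 2d$ cosets with $m = d-1$, and $d^2 - (d-1)^2 = 2d - 1$ cosets with $m = d+1$, so that $\prod_{\omega} m_\omega = (d-1)^{d^2 - 2d}(d+1)^{2d-1}$. The hypothesis $\prod_\omega m_\omega < d^{d^2}$ then follows for $d \geq 3$ from the elementary estimates $(1 - 1/d)^{d^2 - 2d} < e^{-(d-2)}$ and $(1 + 1/d)^{2d-1} < e^2$, which give $\prod_\omega m_\omega < d^{d^2 - 1} e^{4-d}$ and $e^{4-d}/d < 1$; hence $\delta(\stp(T)) = 1$. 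The hypothesis $d \geq 3$ is necessary, since for $d = 2$ one has $\prod_\omega m_\omega = 27 > 16 = d^{d^2}$.

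The main obstacle is the separating-hyperplane verification of the second paragraph: proving that both boundary lines of the narrow sector $S$ are separating is precisely where the specific shift vectors $r_{i,j}$ and multipliers $m_{i,j}$ enter, and it requires a careful case analysis across the four coset families, for two forms, with attention to the degenerate parities, to the empty middle family when $d = 2$, and to the overlap of the cases $i = d-1$ and $j = d-1$ at the coset $(d-1,d-1)$. Everything else is plane cone geometry and the logarithmic estimate above.
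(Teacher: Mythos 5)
Your proposal is correct and follows essentially the same route as the paper: the same two separating forms (the lines $x=0$ and $y=bdx$), the same identification of $B_r^+$ with the narrow sector between them via the observation that every shift vector has the shape $(s,t+bds)$ with $0\le s,t<m_{i,j}$, the same angle computation, and the same coset count $(d-1)^{d^2-2d}(d+1)^{2d-1}$ fed into Theorem \ref{thm:stoppingTime}. The only cosmetic differences are that you route the divergence bound through the tame-cone Theorem \ref{thm:divDensity} where the paper argues directly from Lemma \ref{lem:directed} and Proposition \ref{prop:InvariantDirected}, and you bound the product by $d^{d^2-1}e^{4-d}$ where the paper factors it as $(d^2-1)^{2d-1}(d-1)^{d^2-4d+1}$; both are valid.
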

\begin{proof}
 Clearly, the mapping $T$ is of relatively prime type and the set of shift vectors $S_r$ is acute ($S_r$ lies in the first quadrant).
 
 We verify that the product $\prod_{i,j} m_{i,j} = (d+1)^{2d-1} (d-1)^{d^2-2d}$ is less than $d^{d^2}$ for all $d \geq 3$. Indeed, for $d = 3$ it is easy 
 to verify the inequality $2^{13}<3^9$. For $d \geq 4$ we have
 $d^2 - 4d +1 > 0$ and hence
 \begin{equation*}
     (d+1)^{2d-1} (d-1)^{d^2-2d} = (d^2-1)^{2d-1} (d-1)^{d^2-4d+1} < d^{4d -2} d^{d^2-4d + 1} < d^{d^2}. 
 \end{equation*}
 Using Theorem \ref{thm:stoppingTime} we conclude that $\delta(\stp(T)) = 1$.
 
 \medskip

 \noindent\textbf{Claim:} The lines $L_1 = \bbR (1, bd)$ and $L_2 = \bbR (0,1)$ are separating.
 
 \medskip
 
 \noindent Consider the linear forms $\Phi_1(x,y) = y- bd x$ and $\Phi_2(x,y) = x$ with $\ker(\Phi_\ell) = L_\ell$.
 We have $|m_{i,j} \Phi_\ell(x,y)| > |\Phi_{\ell}(r_{i,j})|$ for all $(x,y) \in \bbZ^2\setminus \ker{\Phi_\ell}$. 
 Indeed, this follows from the simple observation that each shift vector $r_{i,j}$ is of the form $(s,t + bds)$ for certain integers $0 \leq s,t < m_{i,j}$.
 
 Let $B^+$ denote the convex cone generated by the shift vectors $S_r$. 
 It is easy to see that $B^+$ is the intersection of the positive half-planes defined by $\Phi_1$ and $\Phi_2$.
 We conclude with Lemma \ref{lem:directed} that all integral points in $D = E \setminus (B^+ \cup -B^+)$ are directed.
 Moreover, the claim implies that $D \cap \bbZ^2$ is $T$-invariant and consequently (by Proposition \ref{prop:InvariantDirected}) consists
 of divergent points.
 
 Let $\beta(1)$ denote the unit disc. In summary we find
 \begin{equation*}
    \underline{\delta}(\Diver(T)) \geq 1 - 2 \frac{\vol(B^+\cap\beta(1))}{\vol(\beta(1))}.
 \end{equation*}
 Finally, we note that $\vol(\beta(1)) = \pi$ and $\vol(B^+ \cap \beta(1)) = \frac{\gamma}{2}$ where $\gamma$ denotes the angle
 between $(1,bd)$ and $(0,1)$. The proposition follows from the fact that $\cos(\gamma) = \frac{bd}{\sqrt{1+b^2d^2}}$.
\end{proof}

\begin{remark}
 It is easy to construct generalized Collatz mappings $T$ with the
 properties obtained in Proposition \ref{prop:extremeExample} on the lattices $\bbZ^e \subseteq \bbR^e$
 for all $e \geq 2$.
 On the other hand, it seems that for $d = 2$ our methods are
 not able to prove the existence of a generalized Collatz mapping $T$ such that
 $\delta(\stp(T)) = 1$ and $\underline{\delta}(\Diver(T)) \geq 1-\varepsilon$. 
 The reason is that for $d=2$ one needs to choose $m_\omega = 1$
 for at least one $\omega \neq 0$ in order to apply Theorem \ref{thm:stoppingTime}.
 However, this choice implies strong restrictions on the separating hyperplanes such that the tame cone becomes small.
 It is possible though to construct generalized Collatz mappings for $d = 2$ with $\delta(\stp(T)) = 1$ and
 $\underline{\delta}(\Diver(T))$ approaching $\frac{1}{2}$.
\end{remark}

\subsection*{Outlook and conclusion}
We have seen that the conjecture stated in the introduction does not generalize to higher dimensional Collatz mappings. 
However, it seems possible that the conjecture holds inside
the wild cones $C_j$ of the decomposition using separating hyperplanes.
Small computer experiments provide some evidence. 

The geometric approach taken in this article might also yield results on more general higher dimensional Collatz mappings.
A possible direction is to replace the numbers $m_\omega$ by integral matrices $M_\omega$ with specific properties.
For instance, the matrices $M_\omega$ could be chosen to be (1) diagonal with integral entries or (2) of the form $m_\omega g_\omega$ where
$m_\omega \geq 1$ is an integer and $g_\omega \in \GL(\Lambda)$. 
In both cases the underlying geometry gets more complicated, so that further restrictions seem necessary.
However, we think that already special cases might yield new interesting phenomena.

\bibliography{library}{}
\bibliographystyle{amsplain}
\end{document}